\documentclass{amsart}

\usepackage[all]{xy}
\usepackage{hyperref}
\hypersetup{
    colorlinks=true,       
    linkcolor=blue,       
    citecolor=blue,       
    filecolor=blue,      
    urlcolor=blue        
}
\usepackage[lite]{amsrefs}
\usepackage{amssymb}
\usepackage{todonotes}
\usepackage{stmaryrd}
\usepackage{tikz-cd}

\newcommand{\loc}{\operatorname{loc}}

\newcommand{\codim}{\operatorname{codim}}

\newcommand{\supp}{\operatorname{supp}}

\newcommand{\Spec}{\operatorname{Spec}}
\newcommand{\Chow}{\operatorname{Chow}}

\newcommand{\Hom}{\operatorname{Hom}}

\newcommand{\relint}{\operatorname{relint}}

\newcommand{\Pol}{\operatorname{Pol}}
\newcommand{\CaDiv}{\operatorname{CaDiv}}

\newcommand{\triv}{\operatorname{triv}}

\newcommand{\pp}{\mathbb{P}}

\newcommand{\qq}{\mathbb{Q}}
\newcommand{\zz}{\mathbb{Z}}

\newcommand{\cc}{\mathbb{C}}

\renewcommand{\div}{\operatorname{div}}

\newtheorem{introthm}{Theorem}
\newtheorem{introcor}{Corollary}

\newtheorem{theorem}{Theorem}[section]
\newtheorem{lemma}[theorem]{Lemma}

\newtheorem{corollary}[theorem]{Corollary}

\theoremstyle{definition}

\newtheorem{notation}[theorem]{Notation}
\newtheorem{definition}[theorem]{Definition}
\newtheorem{example}[theorem]{Example}

\newtheorem{construction}[theorem]{Construction}

\newtheorem{remark}[theorem]{Remark}

\theoremstyle{remark}

\numberwithin{equation}{section}

\begin{document}

\title[Fundamental group of log terminal $\mathbb{T}$-varieties]{Fundamental group of log terminal $\mathbb{T}$-varieties}
\author[A.~Laface]{Antonio Laface}
\address{
Departamento de Matem\'atica,
Universidad de Concepci\'on,
Casilla 160-C,
Concepci\'on, Chile}
\email{alaface@udec.cl}

\author[A. ~Liendo]{Alvaro Liendo}
\address{
Instituto de Matem\'atica y F\'isica,
Universidad de Talca,
Casilla 721,
Talca, Chile}
\email{aliendo@inst-mat.utalca.cl}

\author[J.~Moraga]{Joaqu\'in Moraga}
\address{
Department of Mathematics, University of Utah, 155 S 1400 E, 
Salt Lake City, UT 84112}
\email{moraga@math.utah.edu}

\subjclass[2010]{Primary 14M25, 
Secondary 14C15. 
}

\thanks{The first author was partially supported by Proyecto FONDECYT
  Regular N. 1150732 and Projecto Anillo ACT 1415 PIA Conicyt. The
  second author was partially supported by Proyecto FONDECYT Regular
  N. 1160864.}

\maketitle

\medskip 

\begin{abstract}
In this article, we introduce an approach to study the fundamental group of a log terminal $\mathbb{T}$-variety.
As applications, we prove the simply connectedness of the spectrum of the Cox ring of a complex Fano variety,
we compute the fundamental group of a rational log terminal $\mathbb{T}$-varieties of complexity one, 
and we study the local fundamental group of log terminal $\mathbb{T}$-singularities with a good torus action and trivial GIT decomposition.
\end{abstract}

\setcounter{tocdepth}{1}
\tableofcontents

\section*{Introduction}

We study the fundamental group of normal complex
algebraic varieties endowed with an effective action of an algebraic
torus $\mathbb{T}:=(\cc^*)^k$, these varieties are known as
$\mathbb{T}$-varieties.  The complexity of a $\mathbb{T}$-variety $X$
is defined to be $\dim(X)-k$.  The $\mathbb{T}$-varieties of
complexity zero are the classic toric varieties that can be described
in terms of fans of polyhedral cones (see, e.g., ~\cite{CLS, Dem, KKMS}).  
In ~\cite{KKMS},~\cite{Tim08} and ~\cite{HF} there are
generalizations of such description to the case of
$\mathbb{T}$-varieties of complexity one.  Finally in ~\cite{AH05,AHH}
the authors introduce the language of {\em polyhedral divisors} and
{\em divisorial fans} to extend the theory of toric varieties to
$\mathbb{T}$-varieties of arbitrary complexity.

In this paper, we are interested in the fundamental group
of the underlying topological space of a complex $\mathbb{T}$-variety.
The topology of toric varieties has been well studied (see,
e.g., \cite{CLS,DJ,Franz,FS,Fu}). 
In the toric case, the fundamental group can be computed 
in terms of the defining fan of the toric variety (see ~\cite[Theorem 12.1.10]{CLS}).
More precisely, the fundamental group of an affine toric variety is a free finitely generated abelian group,
and the fundamentall group of a toric variety is a finitely generated abelian group.
In particular, a complex toric variety with a fixed point for the torus action is simply connected. 
In higher complexity, a $\mathbb{T}$-variety $X$ is determined by a divisorial fan
$\mathcal{S}$, which is a geometric and combinatorial object which depends on certain divisors
on the Chow quotient of $X$ (see Definition~\ref{Chow}) and polyhedra
associated to such divisors in a fixed $\qq$-vector space.
In the case that $X$ is affine with a good $\mathbb{T}$-action (see Definition~\ref{good}) 
we obtain the following result which generalize the toric case:

\begin{introthm}\label{Thm1}
Let $X$ be a complex affine log terminal variety with 
a good $\mathbb{T}$-action and denote by $Y$
a resolution of singularities of its Chow quotient.
Then the pushforward $\pi_*\colon \pi_1(X)\to\pi_1(Y)$
is an isomorphism.
\end{introthm}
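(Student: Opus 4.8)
The plan is to interpose the Altmann--Hausen total space of $X$ between $X$ and $Y$, thereby splitting the proof into a purely combinatorial step, insensitive to the singularities, and an analytic step, which is exactly where log terminality is used (through a theorem of Takayama). By \cite{AH05,AHH}, since $X$ is affine with a good $\mathbb{T}$-action (Definition~\ref{good}) we may write $X\cong X(\mathcal D)$ for a proper polyhedral divisor $\mathcal D=\sum_P\Delta_P\otimes P$ on its Chow quotient $Y_0$, with common tailcone $\sigma\subseteq N_\rr$; goodness of the action forces the weight cone $\sigma^\vee\subseteq M_\rr$ to be pointed, equivalently $\sigma$ is full-dimensional. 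Pulling $\mathcal D$ back along the resolution $Y\to Y_0$ changes neither $X(\mathcal D)$ nor the Chow quotient, and produces the Altmann--Hausen total space $\widetilde X:=\widetilde X(\mathcal D)$ over $Y$, together with its two canonical $\mathbb{T}$-equivariant morphisms: the quotient morphism $p\colon\widetilde X\to Y$ and a proper birational morphism $r\colon\widetilde X\to X$. Inside this picture the homomorphism $\pi_*$ of the statement is realized as $p_*\circ(r_*)^{-1}$, so it suffices to prove that $p_*$ and $r_*$ are both isomorphisms.

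For $p_*$ I would argue geometrically. Write $\widetilde X=\Spec_Y\mathcal A$ with $\mathcal A=\bigoplus_{m\in\sigma^\vee\cap M}\mathcal O_Y(\mathcal D(m))$, so $\mathcal A_0=\mathcal O_Y$. Since $\sigma$ is full-dimensional, choose a one-parameter subgroup $\lambda$ of $\mathbb{T}$ corresponding to a lattice point of $\relint(\sigma)$; then $\lambda$ acts with strictly positive weight on $\mathcal A_m$ for every nonzero $m$. Hence the flow $(s,z)\mapsto\lambda(s)\cdot z$, $s\in\rr_{>0}$, extends continuously to $s=0$, where it becomes the closed embedding $Y\hookrightarrow\widetilde X$ dual to $\mathcal A\twoheadrightarrow\mathcal A_0=\mathcal O_Y$; this is a strong deformation retraction of $\widetilde X$ onto this copy of $Y$, so $p_*\colon\pi_1(\widetilde X)\to\pi_1(Y)$ is an isomorphism. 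Concretely this is collapsing the toric directions of $p$, whose general fibre is an affine toric variety with full-dimensional cone, hence simply connected by \cite[Theorem 12.1.10]{CLS}; applying the same flow directly to $X$ shows in addition that $X$ is contractible, so both groups are in fact trivial, although this is not needed for the isomorphism.

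The step $r_*$ is where the hypotheses genuinely enter. Choose a resolution $\mu\colon W\to\widetilde X$; then $r\circ\mu\colon W\to X$ is a resolution of the log terminal variety $X$, so $(r\circ\mu)_*\colon\pi_1(W)\to\pi_1(X)$ is an isomorphism by Takayama's theorem on simple connectedness of resolutions of log terminal singularities \cite{Tak}. On the other hand $\mu_*\colon\pi_1(W)\to\pi_1(\widetilde X)$ is surjective, because $\mu$ is proper birational and $\widetilde X$ is normal (standard: $\mu$ is an isomorphism over a dense open subset, and dense open immersions into normal varieties are surjective on $\pi_1$). A short diagram chase then forces $r_*$ to be bijective, and therefore $\pi_*=p_*\circ(r_*)^{-1}$ is an isomorphism.

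The main obstacle is precisely the input to the last step: that a resolution of a log terminal variety induces an isomorphism on topological fundamental groups. This is the one substantial ingredient, and it is genuinely false for merely log canonical $X$ (for instance an affine cone over an abelian variety is contractible, while a resolution is not simply connected), which is why ``log terminal'' cannot be dropped. The remaining points — that the pullback of $\mathcal D$ to the resolution still presents $X$ and that $r$ is proper and birational, both from the Altmann--Hausen formalism, together with the continuity at $s=0$ of the retracting flow — are routine and I would only sketch them.
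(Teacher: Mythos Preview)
Your argument is correct and genuinely different from the paper's. The paper invests in a Seifert--van Kampen machinery (Construction~\ref{cons}): it passes to a simple normal crossing model, uses Lemma~\ref{toroidal} to make $\widetilde X$ toroidal (hence log terminal), and then proves $p_*$ is an isomorphism by stratifying $Y$ along the support of $\mathcal D$ and gluing the local isomorphisms of Lemma~\ref{iso} via repeated pushouts. You bypass all of this with a one-line topological observation: the one-parameter subgroup $\lambda\in\relint(\sigma)\cap N$ endows $\widetilde X$ with a $\mathbb G_m$-action having strictly positive weights off $\mathcal A_0=\mathcal O_Y$, so the Bialynicki--Birula flow contracts $\widetilde X$ onto the zero section $Y$, giving $p_*$ immediately. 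Your treatment of $r_*$ is also slightly sharper: rather than using that $\widetilde X$ is log terminal (which the paper obtains from toroidality), you only use normality of $\widetilde X$ to get surjectivity of $\mu_*$, and then the identity $r_*\circ\mu_*=(r\circ\mu)_*$ together with Takayama for $X$ forces $\mu_*$, and hence $r_*$, to be bijective. What your approach buys is brevity and an explicit observation the paper does not make, namely that $X$ itself is contractible (so both groups are trivial); what the paper's approach buys is a framework that is reused verbatim in the complexity-one computation (Theorem~\ref{comp1fund}), where there is no full-dimensional tail cone and hence no retraction available.
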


First, let us point that the group $\pi_1(Y)$ is independent of the
chosen resolution of singularities (see Remark~\ref{ind}) and that the
log terminal condition in Theorem~\ref{Thm1} cannot be weakened:
For instance, a cone over an elliptic curve $C$ has a log canonical
singularity at the vertex and trivial fundamental group, being
contractible, but its Chow quotient is $C$ which has non-trivial
fundamental group (see Remark~\ref{lc}). As a consequence, 
we obtain the following (see~\cite{ADHL} for a definition
of the Cox ring):

\begin{introcor}\label{Cor1}
Let $X$ be a complex Fano variety, and let $\overline{X}$ be the underlying topological space
of the spectrum of the Cox ring of $X$. Then $\overline{X}$ is simply connected.
\end{introcor}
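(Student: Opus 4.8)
The plan is to exhibit $\overline{X}$ as an affine log terminal variety with a good torus action whose Chow quotient is birationally equivalent to $X$, and then to apply Theorem~\ref{Thm1}.

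First I would check that $\overline{X}$ satisfies the hypotheses of Theorem~\ref{Thm1}. Since $X$ is Fano it is a Mori dream space by Birkar--Cascini--Hacon--McKernan, so the Cox ring $\Cox(X)$ is a finitely generated normal $\cc$-algebra and $\overline{X}=\Spec\Cox(X)$ is a normal affine variety of dimension $\dim X+\rank\operatorname{Cl}(X)$; see \cite{ADHL}. It carries the tautological action of the characteristic (quasi)torus $\mathbb{T}=\Spec\cc[\operatorname{Cl}(X)]$. Because $X$ is projective we have $\Cox(X)_0=H^0(X,\mathcal{O}_X)=\cc$, and because the pseudo-effective cone of a Fano variety is a strongly convex rational polyhedral cone, the $\operatorname{Cl}(X)$-grading is pointed; together these say that the $\mathbb{T}$-action is good, with the irrelevant ideal giving its unique attractive fixed point. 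Finally, $X$ is of Fano type, so by the characterization of Fano type varieties through the singularities of their Cox rings (Gongyo--Okawa--Sannai--Takagi) the variety $\overline{X}$ has log terminal singularities. If $\operatorname{Cl}(X)$ has torsion then $\mathbb{T}$ is only a quasitorus; in that case one either extends Theorem~\ref{Thm1} to quasitorus actions, or passes to the finite abelian cover of $\overline{X}$ that splits off the torsion and verifies that this leaves the fundamental group unchanged.

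Granting this, Theorem~\ref{Thm1} yields an isomorphism $\pi_1(\overline{X})\xrightarrow{\ \sim\ }\pi_1(Y)$, where $Y$ is a resolution of singularities of the Chow quotient $W$ of $\overline{X}$ by $\mathbb{T}$, so it remains to prove that $\pi_1(Y)$ is trivial. The key observation is that $W$ is birationally equivalent to $X$. Indeed, by the Cox construction (Hu--Keel) one recovers $X$ as the GIT quotient $\overline{X}^{\mathrm{ss}}(\chi)/\!\!/\mathbb{T}$ for $\chi$ an ample class, and the semistable locus is open and dense in $\overline{X}$; hence a generic $\mathbb{T}$-orbit of $\overline{X}$ is a generic semistable orbit. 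Since the Chow quotient of Definition~\ref{Chow} is the normalization of the closure, inside the Chow variety of $\overline{X}$, of the family of closures of generic orbits, its generic point is a generic $\mathbb{T}$-orbit; the same being true of the GIT quotient $X$, the varieties $W$ and $X$ have the same function field. As $W$ is projective (a closed subvariety of a Chow variety, then normalized), $Y$ is a smooth projective variety birational to $X$. Now $X$, being a klt Fano variety, is rationally connected (Zhang, Hacon--McKernan), hence so is $Y$, and a smooth projective rationally connected complex variety is simply connected (Campana, Koll\'ar--Miyaoka--Mori). Therefore $\pi_1(Y)=1$ and, by the isomorphism above, $\pi_1(\overline{X})=1$.

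The main obstacle is the birational identification of the Chow quotient $W$ with $X$: one has to pin down that Definition~\ref{Chow} really produces Kapranov's Chow quotient, so that its generic point is literally a generic $\mathbb{T}$-orbit, and that the density of the semistable locus then forces the two function fields to agree. A secondary point that needs care is the verification that the condition $\Cox(X)_0=\cc$ together with the pointedness of the effective cone yields a good action in the precise sense of Definition~\ref{good}, as well as the treatment of the quasitorus case when $\operatorname{Cl}(X)$ is not torsion-free.
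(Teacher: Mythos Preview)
Your argument is correct and follows essentially the same strategy as the paper: verify that $\overline{X}$ is affine, log terminal, and carries a good torus action, then apply Theorem~\ref{Thm1} and use that the Chow quotient is birational to $X$, which is simply connected. The paper's proof is terser: it cites \cite{AW} for the good action of the Picard torus and the description of $\overline{X}$ via a pp-divisor on $X$ (which gives the birational identification of the Chow quotient with $X$ directly, rather than going through the GIT comparison you sketch), and it invokes Takayama \cite{Tak2} for the simple connectedness of $X$ rather than passing through rational connectedness. Your route via Zhang/Hacon--McKernan and Campana is equally valid and perhaps more robust if one wants to allow singular $X$. You are also right to flag the quasitorus issue when $\operatorname{Cl}(X)$ has torsion; the paper sidesteps this by speaking of the ``Picard torus'', which is consistent once one knows $X$ is simply connected (hence $\operatorname{Pic}(X)$ torsion-free), but the point is indeed glossed over there.
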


Then, we focus on the case of rational $\mathbb{T}$-varieties $X$ with a one-dimensional
Chow quotient, the so called $\mathbb{T}$-varieties of {\em complexity one}.
In Theorem~\ref{comp1fund}, we give an explicit description of the fundamental group in terms of
the defining divisorial fan $\mathcal{S}$ of $X$.
For instance, we can use this description to characterize the fundamental group 
of an algebraic $\cc^*$-bundle over $\pp^1$.

\begin{introcor}\label{Cor2}
Let $X\rightarrow \pp^1$ be a $\cc^*$-bundle which is trivial outside $\{p_1,\dots, p_{r}\}$.
Then, we have that
\[
 \pi_1(X) 
 \simeq 
 \langle b_1,\dots, b_{r}, t \mid
 b_1\cdots b_r,\,  [b_i,t],\, t^{e_i}b_i^{m_i}
 \text{ for }1\leq i\leq r \rangle,
\]
where $\Delta_{p_i}=\{ e_i/m_i\}$ is the 
polyhedral coefficient at $p_i$.
\end{introcor}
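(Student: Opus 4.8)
The plan is to deduce Corollary~\ref{Cor2} from the explicit complexity-one description in Theorem~\ref{comp1fund}. A $\cc^*$-bundle $X\to\pp^1$ is a rational $\mathbb{T}$-variety of complexity one with one-dimensional torus, so it is determined by a divisorial fan $\mathcal{S}$ on $\pp^1$; since the bundle is trivial away from $\{p_1,\dots,p_r\}$, the only nontrivial polyhedral coefficients are the $\Delta_{p_i}=\{e_i/m_i\}$ (single points, as befits a $\cc^*$-bundle: the fibers are $\cc^*$, not toric surfaces with fixed points), and the tailfan is trivial. First I would record precisely which divisorial fan produces the given bundle and check that it satisfies the hypotheses under which Theorem~\ref{comp1fund} applies (rationality of the Chow quotient $\pp^1$, completeness or properness assumptions, and the log terminal condition — a $\cc^*$-bundle over a smooth curve is smooth, so this is automatic).

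Next I would specialize the presentation furnished by Theorem~\ref{comp1fund} to this $\mathcal{S}$. In the general complexity-one picture the fundamental group is built from generators coming from loops around the special fibers over $p_1,\dots,p_r$ (the $b_i$), a generator $t$ coming from the general torus orbit (equivalently the generic $\cc^*$-fiber), together with the standard surface-group-type relation $b_1\cdots b_r$ reflecting $\pi_1$ of $\pp^1\setminus\{p_1,\dots,p_r\}$ being generated by loops whose product is trivial, the centrality relations $[b_i,t]$ expressing that the torus acts trivially on $\pi_1$ of the base after adding the fiber class, and the local relations $t^{e_i}b_i^{m_i}$ encoding how the polyhedral coefficient $\Delta_{p_i}=\{e_i/m_i\}$ glues the fiber class to the loop $b_i$. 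Writing out Theorem~\ref{comp1fund} with these inputs should produce exactly
\[
\pi_1(X)\simeq\langle b_1,\dots,b_r,t\mid b_1\cdots b_r,\ [b_i,t],\ t^{e_i}b_i^{m_i}\text{ for }1\leq i\leq r\rangle,
\]
so the main content is a careful bookkeeping translation rather than new geometry.

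The step I expect to be the main obstacle is matching conventions: I must verify that the combinatorial recipe of Theorem~\ref{comp1fund}, which is presumably phrased for a general divisorial fan with several polyhedra, degenerates correctly when every polyhedron is a point $\{e_i/m_i\}$ and the tailfan is trivial — in particular that no extra generators or relations survive from cones or from the structure at the generic point of $\pp^1$, and that the sign/orientation conventions make the exponents $e_i$ and $m_i$ appear in the order $t^{e_i}b_i^{m_i}$ rather than transposed or inverted. I would resolve this by tracking the numerator/denominator of $e_i/m_i$ through the orbit-cone correspondence for $\cc^*$-surfaces: the denominator $m_i$ governs the order of the stabilizer of the special orbit over $p_i$, hence appears as the exponent of $b_i$, while $e_i$ records the twisting of the fiber, hence the exponent of $t$. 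Finally I would double-check consistency in the trivial case $r=0$ (or all $e_i/m_i$ integral), where the presentation must collapse to $\pi_1(X)\simeq\pi_1(\pp^1\times\cc^*)\simeq\zz$, which it does: killing the $b_i$ via the integral relations leaves only $t$.
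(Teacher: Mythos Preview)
Your proposal is correct and follows essentially the same approach as the paper: both deduce the corollary by specializing Theorem~\ref{comp1fund} to the single pp-divisor $\mathcal{D}=\sum_i\{e_i/m_i\}\otimes p_i$ with trivial tail cone $\{0\}\subset N_\qq\simeq\qq$. The paper's version is terser, simply computing $\mathcal{Q}(\mathcal{D},p_i)=\{(m_i,e_i)\}$ and $\mathcal{B}(\mathcal{D},p_i)=\{e_i/m_i\}$ so that $\mu(v)=m_i$ and the relation $t^{\mu(v)v}b_i^{\mu(v)}$ becomes $t^{e_i}b_i^{m_i}$, while $\mathcal{R}(\Sigma(\mathcal{S}))$ and the $[t_i,t_j]$ relations are vacuous since $k=1$ and the tail fan is $\{0\}$.
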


Finally, we study the local fundamental group of rational log terminal $\mathbb{T}$-varieties with a good torus action.
It is conjectured that the local fundamental group of a log terminal singularity is finite (see, e.g.~\cite{Koll}).
Moreover, this conjecture is known for the algebraic fundamental group~\cite{Xu}, 
and for toric singularities~\cite[Theorem 12.1.10]{CLS}. We extend this latter result to the following context.

\begin{introthm}\label{Thm2}
Let $X$ be a log terminal $\mathbb{T}$-variety with a good torus action and $x\in X$ the vertex.
Assume that the GIT fan of $X$ 
has a unique maximal chamber
and each fiber of $\pi \colon X\dashrightarrow Y$
over a codimension one point, contains a smooth point.
Then, the local fundamental group at $x\in X$ is finite.
\end{introthm}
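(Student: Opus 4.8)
The plan is to combine the conical structure of $X$ with Theorem~\ref{Thm1}, and then to analyse the quotient map $\pi$ in the spirit of Corollary~\ref{Cor2}. Write $X=\Spec A$ with $A=\bigoplus_{m}A_m$ graded by a pointed submonoid of the character lattice $M$ and $A_0=\cc$, and fix a general one-parameter subgroup $\lambda$ of $\mathbb{T}$, which acts on $X$ with positive weights and hence contracts $X$ to $x$. First I would reduce the local statement to a global one: by the conical structure coming from $\lambda$, a small analytic neighbourhood of $x$ intersected with the smooth locus $X_{\mathrm{reg}}$ is a deformation retract of all of $X_{\mathrm{reg}}$, so the local fundamental group at $x$ equals $\pi_1(X_{\mathrm{reg}})$. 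On the other hand $\lambda$ contracts $X$ to $x$, so $X$ is contractible and $\pi_1(X)=1$; by Theorem~\ref{Thm1} the fundamental group of (a resolution of) the Chow quotient $Y$ then vanishes, i.e.\ $\pi_1(Y)=1$. This is exactly the step that breaks down for the cone over an elliptic curve, which is log canonical but not log terminal.

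Second, I would use the two hypotheses to produce a \emph{Seifert $\mathbb{T}$-bundle} description of $X_{\mathrm{reg}}$ over a big open subset of $Y$. Since the GIT fan has a unique maximal chamber, for an ample linearization in that chamber every point of $X\setminus\{x\}$ is semistable, so the rational quotient $\pi\colon X\dashrightarrow Y$ has no indeterminacy on $X\setminus\{x\}$ and its general fibre is a single free $\mathbb{T}$-orbit. Over a codimension-one point of $Y$, say along a prime divisor $\overline D_i$, the fibre of $\pi$ is a union of orbits; the assumption that it contains a smooth point forces its generic orbit to have finite cyclic stabiliser $\zz/m_i$, where $m_i$ is the denominator occurring in the polyhedral coefficient of the divisorial fan of $X$ along $\overline D_i$ — this is how the polyhedra enter, exactly as $\Delta_{p_i}=\{e_i/m_i\}$ does in Corollary~\ref{Cor2}. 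Discarding from $Y$ the locus over which $\pi$ fails to be flat with such orbit fibres — which has codimension at least two — leaves a big open $Y^\circ\subseteq Y$ over which $X_{\mathrm{reg}}$ is a principal $\mathbb{T}$-Seifert bundle, and the homotopy exact sequence of this fibration reads
\[
 \pi_1(\mathbb{T})=\zz^{k}\;\longrightarrow\;\pi_1(X_{\mathrm{reg}})\;\longrightarrow\;\pi_1^{\orb}\!\Big(Y,\textstyle\sum_i\bigl(1-\tfrac1{m_i}\bigr)\overline D_i\Big)\;\longrightarrow\;1,
\]
supplemented by relations of the shape $t^{e}b^{m}=1$, extracted from the polyhedral divisor, tying the image of $\zz^{k}$ to the loops $b$ around the $\overline D_i$.

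It remains to see that this group is finite. The orbifold fundamental group on the right surjects onto $\pi_1(Y)=1$, so it is generated by the finitely many torsion classes of loops around the $\overline D_i$; log terminality of $X$ translates, through the discrepancy formula for $\mathbb{T}$-varieties, into a log-Fano type positivity of the pair $\bigl(Y,\sum_i(1-\tfrac1{m_i})\overline D_i\bigr)$ transverse to the contracting direction, which on the one hand makes that orbifold fundamental group finite and on the other forces the higher-complexity analogue of the numerical condition ``$\sum_i e_i/m_i\neq0$'' of Corollary~\ref{Cor2}; the relations $t^{e}b^{m}=1$ then collapse the image of $\zz^{k}$ to a finite group as well. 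Hence the local fundamental group at $x$, which equals $\pi_1(X_{\mathrm{reg}})$, is finite. I expect the main obstacle to be the second step — showing that, under the GIT hypothesis and the smoothness-in-codimension-one hypothesis, the only codimension-one degeneration of $\pi$ is the expected cyclic-quotient behaviour and that everything worse happens in codimension $\geq2$, so as not to affect $\pi_1$ — and, secondarily, extracting from log terminality the precise positivity needed to conclude; the elliptic-cone example shows that some hypothesis of this kind is genuinely needed.
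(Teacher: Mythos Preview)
Your strategy and the paper's agree on the first moves---reducing the local question to a global one via the conical structure, and establishing $\pi_1(Y)=1$---but diverge sharply thereafter, and your route has a genuine gap.

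The paper never analyses the Seifert structure of $X\setminus\{x\}\to Y$. It instead passes to a toroidal resolution $\phi\colon X(\mathcal S)\to X$ lying over a log resolution $Y'\to Y$, so that $\pi'\colon X(\mathcal S)\to Y'$ is a morphism of smooth varieties whose general fibre is the toric variety $X(\Sigma)$ for $\Sigma$ a simplicial refinement of the full-dimensional tail cone $\sigma$. The smooth-point-in-codimension-one hypothesis is used precisely to ensure that Nori's lemma \cite{Nor}*{Lemma 1.5} applies to $\phi^{-1}(X\setminus\{x\})\to Y'$, yielding an exact sequence whose fibre term $G(\Sigma)$ is finite (because $\sigma$ is full-dimensional) and whose base term $\pi_1(Y')$ vanishes. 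Finiteness follows immediately; no orbifold fundamental group enters. Incidentally, your derivation of $\pi_1(Y)=1$ from Theorem~\ref{Thm1} and the contractibility of $X$ is neater than the paper's route through the log-Fano property of $(Y,B)$ via \cite{LS}*{Theorem 4.9} and Takayama.

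Your approach, by contrast, requires the orbifold fundamental group $\pi_1^{\orb}\bigl(Y,\sum_i(1-\tfrac1{m_i})\overline D_i\bigr)$ to be finite, and this is where it breaks. Knowing $\pi_1(Y)=1$ is nowhere near enough: already on $\pp^1$ the weights $(2,3,7)$ give an infinite orbifold group, and it is exactly the log-Fano inequality that rules such cases out. In complexity one that is the platonic classification, but in higher complexity the finiteness of the \emph{topological} orbifold fundamental group of a klt log Fano pair is essentially the very conjecture Theorem~\ref{Thm2} is meant to provide evidence for---Xu's result \cite{Xu} handles only the \emph{algebraic} fundamental group. So this step cannot be filled in by citing a known result. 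The further claim that the image of $\zz^k$ is collapsed to a finite group by relations of the shape $t^eb^m=1$ is likewise complexity-one intuition imported from Corollary~\ref{Cor2} and would need independent justification. The paper's passage to a resolved fibration is precisely what trades all of this orbifold analysis for a single clean application of Nori's lemma with a finite fibre group and a simply connected base.
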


The paper is organized as follows: 
In Section~\ref{sec:background},
we introduce the combinatorial description of $\mathbb{T}$-varieties via divisorial fans.
In Section~\ref{sec:genapp}, we explain the general approach to compute
$\pi_1(X(\mathcal{S}))$ by using the information of the divisorial fan $\mathcal{S}$.
Section~\ref{sec:app} is devoted to the applications:
In subsection~\ref{subsec:fixed} we prove Theorem~\ref{Thm1} and Corollary~\ref{Cor1},
in subsection~\ref{subsec:comp1} we describe the fundamental group of a rational log terminal $\mathbb{T}$-variety of complexity one,
and in subsection~\ref{subsec:finite} we prove Theorem~\ref{Thm2}.
Finally, in Section~\ref{sec:examples} we give explicit computations in the case of Du Val singularities.

\subsection*{Acknowledgements}
The authors would like to thank J\'anos Koll\'ar and Chenyang Xu for
pointing out a gap in a early version.  The authors would also like to
thank Hendrik S{\"u}\ss\ for providing interesting examples.

\section{Basic Setup}
\label{sec:background}

In this section, we introduce the description
of $\mathbb{T}$-varieties in terms of divisorial fans
due to Altmann, Hausen and S\"u\ss~\cite{AH05,AHH},
see \cite{AIPSV} for a survey on known results about 
the geometry of $\mathbb{T}$-varieties.
The point of view here is to start
with a variety $Y$ together with
a combinatorial data called
a {\em divisorial fan} on $Y$
and construct a $\mathbb T$-variety $X$
whose normalized Chow quotient is $Y$.
We start recalling the definition of Chow quotient:

\begin{definition}\label{good}
Let $X$ be a normal affine variety. 
A {\em good $\mathbb{T}$-action} on $X$ 
is an effective $\mathbb{T}$-action on $X$
such that there exists a closed point $x\in X$ 
which is in the closure of any $\mathbb{T}$-orbit.
We shall call $x$ the {\em vertex point} of $X$.
\end{definition}

\begin{definition}\label{Chow}
Let $X$ be a $\mathbb{T}$-variety embedded in a projective space $\pp^N$, 
then there exists an open set of $X$ on which 
all the orbits have dimension $k$ and degree $d$, 
the {\em Chow quotient} of $X$ is the closure of the set
of points corresponding to such orbits in $\Chow_{k,d}(\pp^N)$,
the Chow variety parametrizing cycles of dimension $k$
and degree $d$ on $\pp^N$.
The isomorphism class of the Chow quotient is independent
from the chosen embedding.
The normalization of the Chow quotient of $X$ will be called
the {\em normalized Chow quotient}.
\end{definition}

Now we turn to introduce the language of 
{\em polyhedral divisors} and {\em divisorial fans}:
Given $N$ a finitely generated free abelian group
of rank $k$ we will denote by $M:=\Hom(N,\zz)$ its dual
and by $N_\qq :=N\otimes_\zz \qq$ 
and $M_\qq:=M\otimes_\zz \qq$ the associated $\qq$-vector spaces.
We will denote by $\mathbb{T}_N:=\Spec \cc[M] \simeq (\cc^*)^k$
the torus of $N$.
For every convex polyhedron 
$\Delta \subsetneq N_\qq$ one defines its {\em tail cone} as
\[
\sigma(\Delta) := \{ v \in N_\qq \mid v+ \Delta \subset \Delta \}.
\]
A polyhedron $\Delta$ with tail cone $\sigma$ will be called
a {\em $\sigma$-polyhedron}.
The set of $\sigma$-polyhedra is denoted by $\Pol(\sigma)$.
Observe that $\Pol(\sigma)$ endowed with the Minkowski is a semigroup.

We adopt the notation $\CaDiv(Y)_\qq$ for the monoid
of $\qq$-Cartier $\qq$-divisors of a normal variety $Y$.
A {\em polyhedral divisor} on $(Y,N)$ is a finite formal sum of the form
\[
\mathcal{D}:= \sum_{D} \Delta_D \otimes D \in \Pol(\sigma) \otimes_{\zz_{\geq 0}}\CaDiv(Y)_\qq
\]
where the sum is taken over a finite set of prime $\qq$-Cartier
$\qq$-divisors, and $\Delta_D$ are convex $\sigma$-polyhedra in
$N_\qq$.  The common tail cone of the polyhedra $\Delta_D$ is called
the {\em tail cone of $\mathcal{D}$} and is denoted by
$\sigma(\mathcal{D})$.  
The $\sigma(\mathcal{D})$-polyhedra $\Delta_D$ associated to the divisor $D$ wil be called the 
{\em polyhedral coefficient} of $D$.
We will also consider the enlarged monoid
$\Pol^+(\sigma):= \Pol(\sigma) \cup \{ \emptyset \}$ with addition
rule $\emptyset +\Delta := \emptyset$ for every
$\Delta \in \Pol^+(\sigma)$.  The {\em locus} of a polyhedral divisor
$\mathcal{D}$ is defined as
\[
\loc(\mathcal{D}):= Y - \bigcup_{\Delta_D = \emptyset }  D
\]
and we say that $\mathcal{D}$ has {\em complete locus}
whenever $\loc(\mathcal{D})=Y$ meaning that there is no 
$\qq$-divisor $D\subset Y$ with coefficient $\emptyset$.
The {\em support} of $\mathcal{D}$ is 
\[
\supp(\mathcal{D}):= \loc(\mathcal{D}) \cap \bigcup_{\Delta_D \neq \sigma} D
\]
and the {\em trivial locus} of $\mathcal{D}$
is the complement of the support of $\mathcal{D}$
in the locus of $\mathcal{D}$,
and is denoted by $\triv(\mathcal{D})$.

Let $\mathcal{D}$ be a polyhedral divisor on $(Y,N)$
with tail cone $\sigma$.
We have a natural homomorphism of monoids
\[
 \mathcal{D}\colon \sigma^\vee \rightarrow \CaDiv(Y)
\qquad u\mapsto
 \mathcal{D}(u)
 :=\sum \min_{v\in \Delta_D} \langle u,v \rangle D.
\]
Observe that $\mathcal{D}(u)+\mathcal{D}(u') \leq
\mathcal{D}(u+u')$ holds for every $u,u'\in \sigma^\vee$
and that the support of any divisor $\mathcal{D}(u)$
is contained in the support of $\mathcal{D}$.

\begin{definition}\label{p-div}
A $\qq$-divisor $D$ is said to be {\em semiample}
if it admits a base point free multiple
and is said to be {\em big} if some multiple admits
a section with affine complement.
A polyhedral divisor $\mathcal{D}$ is said to be a {\em proper polyhedral divisor}
if $\mathcal{D}(u)$ is semiample for every $u\in \sigma(\mathcal{D})^\vee$
and $\mathcal{D}(u)$ is big for $u\in \relint(\sigma(\mathcal{D})^\vee)$.
In order to shorten notation, we will say that a proper polyhedral divisor $\mathcal{D}$
is a {\em pp-divisor}.
\end{definition}

We recall the relation between affine $\mathbb{T}$-varieties and pp-divisors.
Given a pp-divisor $\mathcal{D}$ on $(Y,N)$ 
one defines the $M$-graded 
$\mathcal{O}_{\loc(\mathcal{D})}$-algebra
\[
 \mathcal{A}(\mathcal{D})
 :=
 \bigoplus_{u\in \sigma^\vee \cap M} \mathcal{O}_{\loc(\mathcal{D})}(\mathcal{D}(u)).
\]
The $M$-grading induces an effective 
$\mathbb{T}_N$ action on both the relative
spectrum $\widetilde X(\mathcal D)$ and the
spectrum of global sections $X(\mathcal D)$
of the above sheaf of algebras. The inclusion
$\mathcal{O}_{\loc(\mathcal{D})} 
\to \mathcal{A}(\mathcal{D})$ induces a good 
quotient morphism $\pi \colon \widetilde X(\mathcal{D})
\to \loc(\mathcal{D})$. 
The construction is summarized in the 
following diagram
\[
 \xymatrix{
 \widetilde X(\mathcal D)\ar[d]_-\pi\ar[r]^\phi & X(\mathcal D)\\
 \loc(\mathcal{D})
 }
\]
where the natural morphism $r$
can be proved to be a $\mathbb{T}_N$-equivariant 
birational contraction.
The main result in ~\cite{AH05} states that every normal affine
$\mathbb{T}$-variety arises in this way.

In what follows we will describe the gluing process of 
affine $\mathbb{T}$-varieties in terms of pp-divisors.
Given two pp-divisors $\mathcal{D}$ and $\mathcal{D}'$ on $(Y,N)$ we
write $\mathcal{D}'\subseteq \mathcal{D}$ if 
$\Delta'_{D} \subseteq \Delta_D$ for every
$\qq$-Cartier $\qq$-divisor $D\subset Y$.
If $\mathcal{D}' \subset \mathcal{D}$ 
then we have an induced morphism
$X(\mathcal{D}')\rightarrow X(\mathcal{D})$
and we say that $\mathcal{D'}$
is a face of $\mathcal{D}$ if the 
induced morphism is an embedding, 
and we denote this relation by
$\mathcal{D}' \leq \mathcal{D}$.
If $\mathcal{D}'$ is a face of $\mathcal{D}$
then in particular we have that 
$\sigma(\mathcal{D}') \leq \sigma(\mathcal{D})$.
The intersection of two pp-divisors 
$\mathcal{D}$ and $\mathcal{D}'$ 
is defined to be the polyhedral divisor
$\mathcal{D}\cap \mathcal{D}':=
\sum_D (\Delta_D \cap \Delta'_D)\otimes D$.

\begin{definition}\label{df}
A set $\mathcal{S}$ of pp-divisors is said to be 
a {\em divisorial fan} if it holds the following conditions:
\begin{itemize}
\item $\mathcal{S}$ is finite,
\item $\mathcal{S}$ is closed under taking intersection,
\item the intersection of any two pp-divisors of $\mathcal{S}$
is a face of both.
\end{itemize}
\end{definition}

Gluing the affine $\mathbb{T}$-varieties $X(\mathcal{D})$ and
$X(\mathcal{D}')$ along the affine subvarieties
$X(\mathcal{D}\cap \mathcal{D}')$ for every $\mathcal{D}$ and
$\mathcal{D}'$ in $\mathcal{S}$, we obtain a $\mathbb{T}$-variety
$X(\mathcal{S})$ (see ~\cite[Section 4.4]{AIPSV} for details).
Analogously for the $\mathbb{T}$-varieties $\widetilde X(\mathcal{D})$
with $\mathcal{D}\in \mathcal{S}$ we obtain a $\mathbb{T}$-variety
$\widetilde X(\mathcal{S})$ with two equivariant morphisms
\[
 \xymatrix{
 \widetilde X(\mathcal{S})\ar[d]_-\pi\ar[r]^\phi & X(\mathcal S)\\
 Y
 }
\]
where $r$ is an equivariant birational 
contraction and $\pi$ is a quotient for
the torus action.
The main result in ~\cite{AHH} states that all normal
$\mathbb{T}$-varieties arise this way.

The set $\{ \sigma(\mathcal{D}) \mid \mathcal{D}\in \mathcal{S}\}$
is the {\em tail fan } $\Sigma(\mathcal{S})$ of $\mathcal{S}$.
Observe that the tail fan $\Sigma(\mathcal{S})$ is indeed a fan.
We define the {\em locus} of $\mathcal{S}$ to be the set
\[
\loc(\mathcal{S}):=\bigcup_{\mathcal{D}\in \mathcal{S}} 
\loc(\mathcal{D}),
\]
the {\em support} of $\mathcal{S}$ to be the set
\[
\supp(\mathcal{S}):=\bigcup_{\mathcal{D}\in \mathcal{S}}
\supp(\mathcal{D}),
\]
and the {\em trivial locus} of $\mathcal{S}$ to be
\[
\triv(\mathcal{S}):= \bigcap_{\mathcal{D}\in \mathcal{S}}
\triv(\mathcal{D}).
\]
When $\mathcal{S}$ is the only divisorial fan on $Y$
we may denote $\triv(\mathcal{S})$ by $Y_{\triv}$.

\section{General approach to compute the fundamental group}
\label{sec:genapp}

In this section, we explain an approach to compute the fundamental group 
of a complex log terminal $\mathbb{T}$-variety $X(\mathcal{S})$ using its defining divisorial fan $\mathcal{S}$.
First, we recall some definitions and results from ~\cite{LS}.

\begin{definition}
Consider a pp-divisor $\mathcal{D}$ on $(Y,N)$ with tail cone $\sigma$, 
and a morphism $\psi \colon Y' \rightarrow Y$, 
such that no irreducible component of $\supp(\mathcal{D})$ contains $\psi(Y')$.
The {\em polyhedral pull back} is
\[
\psi^*(\mathcal{D})= \sum_{D} \Delta_D \otimes \psi^*(D) 
\in
\Pol(\sigma) \otimes_{\zz_{\geq 0}}\CaDiv(Y')_\qq.
\]
Observe that $\psi^*(\mathcal{D})$ is a polyhedral divisor which may not be a pp-divisor,
meaning that $\psi^*(\mathcal{D})$ may not be a proper polyhedral divisor.
Given a divisorial fan $\mathcal{S}=\{ \mathcal{D}_i \mid i \in I\}$ on $(Y,N)$,
its pull back is $\psi^*(\mathcal{S}) =\{ \psi^*(\mathcal{D}_i) \mid i \in I\}$.
\end{definition}

\begin{lemma}\label{resolution}
Let $\mathcal{S}$ be a divisorial fan on $(Y,N)$.
If $\psi \colon Y'\rightarrow Y$ is a projective birational morphism, then
$\psi^*(\mathcal{S})$ is a divisorial fan.
Moreover $\psi$ induces a $\mathbb{T}_N$-equivariant 
isomorphism $X(\psi^*(\mathcal{S}))\simeq X(\mathcal{S})$.
\end{lemma}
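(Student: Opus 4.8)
The plan is to verify the two assertions in turn: first that $\psi^*(\mathcal{S})$ is again a divisorial fan, and then that the associated $\mathbb{T}_N$-variety is unchanged.

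\emph{Step 1: $\psi^*(\mathcal{D})$ is a pp-divisor for each $\mathcal{D}\in\mathcal{S}$.} The combinatorial data (the $\sigma$-polyhedra $\Delta_D$ and the tail cone $\sigma$) are untouched by $\psi^*$; only the divisorial coefficients change via $D\mapsto\psi^*(D)$. Since $\psi$ is projective and birational, the pullback of a Cartier $\mathbb{Q}$-divisor is again $\mathbb{Q}$-Cartier, and pullback commutes with taking $\min$ over the vertex set of $\Delta_D$, so $\psi^*(\mathcal{D})(u)=\psi^*\bigl(\mathcal{D}(u)\bigr)$ for all $u\in\sigma^\vee$. Thus I must check that semiampleness and bigness are preserved. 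Semiampleness is immediate: the pullback of a semiample $\mathbb{Q}$-divisor under any morphism is semiample. For bigness, I would use that $\psi$ is birational, so $\psi^*$ preserves the Iitaka dimension of a semiample divisor; alternatively, invoke the characterization ``big $=$ some multiple has a section with affine complement'' together with the fact that $\psi$ being projective birational pulls an affine open back to an affine open complement (the preimage of the section's zero locus). Hence each $\psi^*(\mathcal{D})$ is proper, i.e.\ a pp-divisor.

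\emph{Step 2: the fan axioms.} Finiteness of $\psi^*(\mathcal{S})$ is clear since $\mathcal{S}$ is finite. For closure under intersection, note $\psi^*(\mathcal{D}\cap\mathcal{D}') = \sum_D(\Delta_D\cap\Delta'_D)\otimes\psi^*(D) = \psi^*(\mathcal{D})\cap\psi^*(\mathcal{D}')$, because the polyhedral coefficients are literally the same and $\psi^*$ distributes over the formal divisor sum; since $\mathcal{D}\cap\mathcal{D}'\in\mathcal{S}$ this intersection lies in $\psi^*(\mathcal{S})$. For the face condition, I would invoke the criterion from \cite{LS} (or \cite{AHH}) that $\mathcal{D}'\leq\mathcal{D}$ can be read off from the polyhedral coefficients and the tail cones together with a condition on the divisors involved; since $\psi$ is birational the relevant divisors correspond bijectively and the combinatorial part is unchanged, so $\mathcal{D}'\cap\mathcal{D}''\leq\mathcal{D}',\mathcal{D}''$ transfers to the pullbacks. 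This is the step I expect to require the most care: making precise that the ``face of'' relation is detected by data invariant under $\psi^*$, and that properness (needed so that ``$X(\mathcal{D}')\to X(\mathcal{D})$ is an embedding'' even makes sense) survives — which is why Step 1 is done first.

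\emph{Step 3: $X(\psi^*(\mathcal{S}))\simeq X(\mathcal{S})$ equivariantly.} It suffices to establish this for a single pp-divisor $\mathcal{D}$, since the global object is glued from the affine pieces along intersections and the isomorphisms constructed affine-locally will be compatible by functoriality. On the level of sheaves of algebras, $\psi_*\mathcal{O}_{Y'}=\mathcal{O}_Y$ because $\psi$ is projective birational with $Y$ normal (so $\psi$ has connected fibres and $Y$ is the Stein factorization target); combined with $\psi^*(\mathcal{D})(u)=\psi^*(\mathcal{D}(u))$ and the projection formula, one gets $\Gamma\bigl(\loc(\psi^*\mathcal{D}),\,\mathcal{O}(\psi^*\mathcal{D}(u))\bigr)=\Gamma\bigl(\loc(\mathcal{D}),\,\mathcal{O}(\mathcal{D}(u))\bigr)$ for every $u\in\sigma^\vee\cap M$, hence an isomorphism of the $M$-graded rings of global sections, compatible with the $\mathbb{T}_N$-action since the grading is preserved. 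Taking $\Spec$ gives the desired $\mathbb{T}_N$-equivariant isomorphism $X(\psi^*(\mathcal{D}))\simeq X(\mathcal{D})$, and gluing finishes the proof. (Note that the relative spectra $\widetilde X$ do change — $\psi$ induces the equivariant birational contraction $\widetilde X(\psi^*(\mathcal{S}))\to\widetilde X(\mathcal{S})$ lying over $\psi$ — but this is harmless and indeed part of the point of passing to a resolution.)
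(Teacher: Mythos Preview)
Your approach matches the paper's: verify $\psi^*(\mathcal{D})(u) = \psi^*(\mathcal{D}(u))$, check that semiampleness and bigness survive pullback along a projective birational map, identify the $M$-graded rings of global sections, and glue. Two points deserve attention.

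First, in Step~3 your appeal to the projection formula is too quick. The divisor $\mathcal{D}(u)$ is only a $\qq$-divisor, and $\mathcal{O}(\mathcal{D}(u))$ means $\mathcal{O}(\lfloor\mathcal{D}(u)\rfloor)$; since $\lfloor\psi^*\mathcal{D}(u)\rfloor$ need not equal $\psi^*\lfloor\mathcal{D}(u)\rfloor$, the projection formula does not apply directly to the sheaves you wrote down. The paper closes this gap by passing to the Cartier index $m$ of $\mathcal{D}(u)$: one has $f\in\Gamma(Y,\mathcal{O}_Y(\mathcal{D}(u)))$ iff $f^m\in\Gamma(Y,\mathcal{O}_Y(m\mathcal{D}(u)))$ (using that $\ddivv(f)+D\geq 0$ iff $\ddivv(f)+\lfloor D\rfloor\geq 0$), the projection formula then applies legitimately to the Cartier divisor $m\mathcal{D}(u)$, and the same equivalence on $Y'$ brings you back.

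Second, your Step~2 treatment of the face axiom is vague. In fact no separate combinatorial criterion is needed: once Step~3 furnishes natural $\mathbb{T}_N$-equivariant isomorphisms $X(\psi^*\mathcal{D})\simeq X(\mathcal{D})$ compatible with the maps induced by $\mathcal{D}'\subseteq\mathcal{D}$, the condition that $X(\mathcal{D}\cap\mathcal{D}')\to X(\mathcal{D})$ be an embedding transports verbatim to the pullbacks. The paper takes this route implicitly, which is why it spends no time on the fan axioms per se.
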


\begin{proof}
Without loss of generality we assume that all the 
pp-divisors of $\mathcal{S}$ have complete locus:
in case it is not complete, replace $Y$ by 
${\rm Loc}(\mathcal{D})$ in the whole proof.
Given a pp-divisor $\mathcal{D}\in \mathcal{S}$ 
and an element $u\in \sigma(\mathcal{D})^\vee$
we have
\[
\psi^*(\mathcal{D})(u) = \sum_D \min_{v \in \Delta_D} \langle u, v \rangle \psi^*(D)  = 
\psi^*\left( \sum_D \min_{v \in \Delta_D} \langle u, v \rangle D \right) = \psi^*(\mathcal{D}(u)).
\]
Therefore, $\psi^*(\mathcal{D})(u)$ is semiample.
Moreover, if $u\in \relint(\sigma(\mathcal{D})^\vee)$, 
we know that $\mathcal{D}(u)$ is big, 
and the pull-back of a big divisor with respect 
to a projective birational map is again big,
so we conclude that $\psi^*(\mathcal{D}(u))$ is a big $\qq$-divisor.  
Thus, $\psi^*(\mathcal{D})$ is a pp-divisor.

Now we prove that $\psi$ induces a $\mathbb{T}$-equivariant 
isomorphism $X(\psi^*(\mathcal{D}))\simeq X(\mathcal{D})$.
First of all observe that if $D$ 
is a $\mathbb Q$-Weil divisor on $Y$,
and $f$ is a rational function then 
$\div(f) + D\geq 0$ if and only 
if $\div(f) + \lfloor D\rfloor\geq 0$.
Now let $u\in \sigma(\mathcal{D})^\vee$ and
let $m$ be the Cartier index of $\mathcal{D}(u)$.
Observe that we have
\begin{align}
\nonumber &f \in \Gamma(Y, \mathcal{O}_Y(\mathcal{D}(u)))\Leftrightarrow \\
\nonumber &f^m \in \Gamma(Y, \mathcal{O}_Y(m\mathcal{D}(u))) \Leftrightarrow \\
\nonumber &f^m \in \Gamma(Y',\mathcal{O}_{Y'}(m\psi^*(\mathcal{D}(u)))) \Leftrightarrow\\
\nonumber &f \in \Gamma(Y',\mathcal{O}_{Y'}(\psi^*\mathcal{D}(u))),
\end{align}
where the first and last equivalences are 
by the previous observation, while the 
second equivalence follows from the 
fact that $\psi$ is birational
with connected fibers and the projection formula.
So, there is a natural isomorphism of $M$-graded $\mathcal{O}_Y$-algebras
$\mathcal{A}(\psi^*(\mathcal{D})) \simeq \mathcal{A}(\mathcal{D})$, concluding the claim. 
The isomorphism $X(\psi^*(\mathcal{S})) \simeq X(\mathcal{S})$ follows from gluing the above 
$\mathbb{T}_N$-equviariant affine isomorphisms.
\end{proof}

\begin{remark}
The $\mathbb{T}$-varieties $\widetilde{X}(\psi^*(\mathcal{S}))$ and 
$\widetilde{X}(\mathcal{S})$ are isomorphic if and only if the projective birational map
$\psi \colon Y' \rightarrow Y$ is the identity (see, e.g.,~\cite[Section 2]{LS}).
\end{remark}

\begin{definition}
A pp-divisor $\mathcal{D}$ on $(Y,N)$ is {\em simple normal crossing} if $Y$ is smooth and
the support of $\mathcal{D}$ is a divisor with simple normal crossing support. 
Analogously, a divisorial fan $\mathcal{S}$ on $(Y,N)$ is {\em simple normal crossing}
if all its pp-divisors $\mathcal{D}\in \mathcal{S}$ are simple normal crossing.
\end{definition}

\begin{definition}
An algebraic variety $X$ is {\em toroidal} if for each point $x\in X$ there
exists a formal neighborhood of $x$ on $X$ which is isomorphic 
to a formal neighborhood of a point in an affine toric variety.
\end{definition}

The following lemma is proved in ~\cite[Proposition 2.6]{LS}.

\begin{lemma}\label{toroidal}
Let $\mathcal{S}$ be a divisorial fan on $(Y,N)$. If the divisorial fan $\mathcal{S}$ is 
simple normal crossing, then the $\mathbb{T}$-variety $\widetilde{X}(S)$ is toroidal.
\end{lemma}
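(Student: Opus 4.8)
The plan is to reduce the statement to an explicit toric model over affine space, using that being toroidal is a condition on the formal neighbourhoods of points. Since $\widetilde X(\mathcal S)$ is covered by the open sets $\widetilde X(\mathcal D)$ with $\mathcal D\in\mathcal S$, it is enough to treat a single $\widetilde X(\mathcal D)$, say with tail cone $\sigma\subseteq N_\qq$ and quotient morphism $\pi\colon\widetilde X(\mathcal D)\to\loc(\mathcal D)$. Fix $\widetilde x\in\widetilde X(\mathcal D)$, put $y:=\pi(\widetilde x)$ and $n:=\dim Y$. As $\mathcal S$ is simple normal crossing, $Y$ is smooth and $\supp(\mathcal D)$ is a simple normal crossing divisor; let $D_1,\dots,D_s$ be its components passing through $y$, so $s\le n$ and each $\Delta_{D_i}\neq\sigma$. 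Choosing regular parameters adapted to $D_1\cup\dots\cup D_s$, I obtain a Zariski open neighbourhood $y\in V\subseteq\loc(\mathcal D)$ together with an \'etale morphism $\psi\colon V\to\mathbb A^n=\mathbb A^s\times\mathbb A^{n-s}$ such that $D_i\cap V=\psi^{-1}(\{x_i=0\})$ for $1\le i\le s$ and $V$ meets no other component of $\supp(\mathcal D)$.

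Next I would observe that over $V$ the polyhedral divisor only records the $D_i$: the components of $\mathcal D$ with polyhedral coefficient $\emptyset$ avoid $\loc(\mathcal D)\supseteq V$, while those with coefficient exactly $\sigma$ contribute the zero term to each $\mathcal D(u)$ because $\min_{v\in\sigma}\langle u,v\rangle=0$ for $u\in\sigma^\vee$. Hence $\mathcal D|_V=\psi^*\mathcal D_0$, where
\[
 \mathcal D_0:=\sum_{i=1}^{s}\Delta_{D_i}\otimes\{x_i=0\}
\]
is a polyhedral divisor on $\mathbb A^n$ with the same tail cone $\sigma$. Since $\psi^*$ commutes with the direct sum defining $\mathcal A(-)$ and, $\psi$ being \'etale and $Y,V$ smooth, satisfies $\psi^*\mathcal O_Y(\mathcal D(u))=\mathcal O_V\big((\psi^*\mathcal D)(u)\big)$ for every $u$ — which one checks by clearing denominators with the Cartier index, just as in the proof of Lemma~\ref{resolution} — we get $\mathcal A(\mathcal D)|_V\cong\psi^*\mathcal A(\mathcal D_0)$ as sheaves of graded $\mathcal O_V$-algebras, hence
\[
 \widetilde X(\mathcal D)\times_Y V\;\cong\;\widetilde X(\mathcal D_0)\times_{\mathbb A^n}V .
\]
The right-hand side is \'etale over $\widetilde X(\mathcal D_0)$, so the formal completion of $\widetilde X(\mathcal D)$ at $\widetilde x$ coincides with that of $\widetilde X(\mathcal D_0)$ at the image point, and the problem reduces to showing that $\widetilde X(\mathcal D_0)$ is a toric variety.

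For this I would appeal to the toric up/down-grade dictionary of \cite{AH05} (see also \cite{AIPSV}): $\mathcal D_0$ is a polyhedral divisor on the toric variety $\mathbb A^n$ whose coefficients are supported on torus invariant divisors, and $\widetilde X$ of such a polyhedral divisor is again a toric variety. Concretely, writing $f_1,\dots,f_s$ for the standard basis of $\zz^s$, setting $\widetilde N:=N\oplus\zz^s$, and letting $\mathcal V(\Delta)$ be the vertex set of a polyhedron $\Delta$, one expects the model
\[
 \widetilde X(\mathcal D_0)\;\cong\;U_{\widetilde\sigma}\times\mathbb A^{n-s},
 \qquad
 \widetilde\sigma:=\operatorname{Cone}\!\Big((\sigma\times\{0\})\ \cup\ \bigcup_{i=1}^{s}\big(\mathcal V(\Delta_{D_i})\times\{f_i\}\big)\Big)\subseteq\widetilde N_\qq,
\]
the quotient onto $\mathbb A^s$ being induced by the projection $\widetilde N\to\zz^s$. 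Granting this, the formal completion of $\widetilde X(\mathcal S)$ at $\widetilde x$ is isomorphic to the formal completion of the affine toric variety $U_{\widetilde\sigma}\times\mathbb A^{n-s}$ at a point, and therefore $\widetilde X(\mathcal S)$ is toroidal.

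The step I expect to be the main obstacle is the identification in the previous paragraph: one must prove carefully that $\widetilde X$ of a polyhedral divisor over a toric base with torus invariant coefficients is toric and pin down the cone $\widetilde\sigma$, in particular checking that $\widetilde\sigma$ is strongly convex — which uses that every $\Delta_{D_i}$ has the pointed tail cone $\sigma$ and that the $\zz^s$-component of every generator lies in the positive orthant. The only other delicate point, the compatibility $\psi^*\mathcal O_Y(\mathcal D(u))=\mathcal O_V((\psi^*\mathcal D)(u))$, is routine once denominators are cleared using the Cartier index, exactly in the spirit of the computation in Lemma~\ref{resolution}.
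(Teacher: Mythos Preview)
The paper does not actually prove this lemma: it simply records that the statement is \cite[Proposition 2.6]{LS} and moves on. Your sketch is correct and is essentially the argument one finds in \cite{LS}: pass to an \'etale chart adapted to the simple normal crossing support, pull the polyhedral divisor back to one on $\mathbb A^n$ with torus-invariant coefficients, and then invoke the toric downgrade of \cite{AH05} to identify $\widetilde X(\mathcal D_0)$ with an affine toric variety times an affine factor. In particular, the cone $\widetilde\sigma$ you write down is, after swapping the two factors of $\widetilde N$, exactly the cone $\sigma(\mathcal D,Z)$ that the paper introduces in Remark~\ref{ngbhd}, so your computation is consistent with (and in fact anticipates) the formal-neighbourhood description used later. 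The two points you flag as delicate---the compatibility of $\psi^*$ with $\mathcal O(\mathcal D(u))$ and the strong convexity of $\widetilde\sigma$---are handled in \cite{LS} and \cite{AH05} respectively, so there is no genuine gap.
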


\begin{definition}\label{strata}
Given a simple normal crossing pp-divisor $\mathcal{D}$ on $(Y,N)$ we can write $\mathcal{D}=\sum_{D} \Delta_D \otimes D$.
Given the prime divisors $D_1, \dots, D_r$ such that $\Delta_{D_i} \neq \sigma$,
we define the {\em strata} of the prime divisors $D_1,\dots, D_r$, to be the locally closed set
\[
Z_{D_1,\dots,D_r} = D_1\cap\dots\cap D_r - \bigcup_{\Delta_D\neq \sigma} D_1\cap\dots\cap D_r\cap D.
\]
Observe that the trivial open set of $\mathcal{D}$ is the strata of the empty set of prime divisors.
The stratas of $\mathcal{D}$ give a natural stratification of $Y$. 
We define a {\em strata} $Z$ of the divisorial fan $\mathcal{S}$ to be a finite intersection of strata of the pp-divisors $\mathcal{D}\in\mathcal{S}$.
Clearly, the divisorial fan $\mathcal{S}$ define a natural stratification of $Y$.			
\end{definition}

\begin{remark}\label{ngbhd}
In this remark we will describe a formal neighborhood of the preimage 
on $\widetilde{X}(\mathcal{S})$ of a strata $Z$ on $Y$ as defined in~\ref{strata}.
First, we consider the case of a single proper polyhedral divisor $\mathcal{D}$ on $(Y,N)$.
By virtue of Lemma~\ref{resolution}, we may assume that the projective variety $Y$ is smooth
and the polyhedral divisor $\mathcal{D}$ is simple normal crossing,
therefore the strata $Z$ defined in~\ref{strata} is indeed a simple normal crossing strata.

Consider $Y$ a smooth projective variety of dimension $n-k$
and $N$ a free finitely generated abelian group of rank $k$. 
Let $\mathcal{D}$ be a simple normal crossing pp-divisor on $(Y,N)$ with $\sigma=\sigma(\mathcal{D})$ its tail cone.
We write $\mathcal{D}= \sum_D \Delta_D \otimes D$,
and denote by $Z:=Z_{D_1,\dots,D_r}$ the strata of the prime divisors $D_1,\dots, D_r$.
Then, we can describe a formal neighborhood of the fiber $\pi^{-1}(Z)$
as follows: 
Consider the finitely generated free abelian group $N' =\zz^r \times  N $ and the cone
\[
\sigma(\mathcal{D},Z):= \langle (0,\sigma), (e_1, \Delta_{D_1}), \dots, (e_r, \Delta_{D_r}) \rangle \subset N'_\qq
\]
where the $e_i$'s give the canonical basis of $\zz^r$.
Thus, the formal neighborhood of a closed 
point of $\pi^{-1}(Z)$ is isomorphic to that 
of a corresponding closed point of
\[ 
X(\sigma(\mathcal{D},Z))\times Z.
\]
Therefore, given a divisorial fan $\mathcal{S}$ on $(Y,N)$
and $Z\subsetneq Y$ a strata of the divisorial fan,
the formal neighborhood of a closed point of $\pi^{-1}(Z)$ 
for the good quotient $\pi \colon \widetilde{X}(\mathcal{S})\rightarrow Y$
is isomorphic to the formal neighborhood of a 
corresponding closed point of
\[
X(\Sigma(\mathcal{S},Z))\times Z.
\] 
where $\Sigma(\mathcal{S},Z)$ is the fan in $N'$
given by the cones $\sigma(\mathcal{D},Z)$ for all $\mathcal{D}\in \mathcal{S}$.
Hence, given an analytic tubular 
neighbourhood $W_Z$ of $Z$ (in the 
usual sense of manifolds~\cite{bt}*{pag. 66})
the preimage $\pi^{-1}(W_Z)$ admits a natural structure of topological fibration
with base $Z$ and fiber $X(\Sigma(\mathcal{S},Z))$, given by the composition of the good quotient
$\pi^{-1}(W_Z)\rightarrow W_Z$ and the retraction $W_Z\rightarrow Z$.
\end{remark}

\begin{remark}
From the above description, we can see 
that the fiber over a closed point $y\in Z$ 
is isomorphic to the toric bouquet associated to the polyhedron
\[
\mathcal{D}_y = \sum_{i} \Delta_{D_i} \subset N_\qq.
\]
For the definition of toric bouquet, see~\cite[Section 2.2]{AIPSV}.
\end{remark}

\begin{notation}\label{lattice}
Let $\sigma$ be a rational polyhedral cone in $N_\qq$.
Denote by $N_\sigma$ the subgroup of $N$ 
generated by $\sigma \cap N$
and by $N(\sigma)$ the lattice quotient $N/N_\sigma$.
By ~\cite[Theorem 12.1.10]{CLS}, we know that 
\[
\pi_1(X(\sigma)) \simeq N(\sigma).
\]
Observe that $N(\sigma)$ is a free finitely generated abelian group.
Given a fan $\Sigma$ of polyhedral cones in $N_\qq$,
we denote by $N_\Sigma$ the semigroup generated by 
$\langle N_\sigma \mid \sigma \in \Sigma \rangle$,
and by $N(\Sigma)$ the quotient $N/N_\Sigma$.
By ~\cite[Theorem 12.1.10]{CLS}, we know that 
\begin{equation}\label{toricfund}
\pi_1(X(\Sigma))\simeq N(\Sigma).
\end{equation}
Observe that $N(\Sigma)$ is a finitely generated abelian group,
however it may be not free.
Given a basis $\{t_1,\dots, t_k\}$ of the lattice $N$,
a presentation for the fundamental group is the following
\begin{equation}
\label{pi1-toric}
 \pi_1(X(\Sigma)) \simeq \langle t_1, \dots, t_k \mid \mathcal{R}(\Sigma)) \rangle,
\end{equation}
where $\mathcal{R}(\Sigma)$ is the set of monomials $t_1^{n_1}\cdots t_k^{n_k}$,
where $(n_1,\dots,n_k)$ runs over all the bases 
of the lattices $N_{\tau}\subseteq N$ for each $\tau\in \Sigma$.

Using the notation of Remark~\ref{ngbhd}, 
we can see that the vectors $(e_i,0)$ of $N'$
can be realized as loops on $Y$ which goes around the divisor $D_i$.
Indeed, on a formal neighborhood of the generic point $\eta$ of $Z$, the variety $Y$ is 
analytically diffeomorphic to $\mathbb{A}^{r}_\mathbb{C}$. 
We denote by $\{0\}_r$ the origin of the affine space $\mathbb{A}^{r}_{\mathbb{C}}$, 
and by $S^{2r-1}$ the sphere of elements of norm one in $\mathbb{A}^{r}_{\mathbb{C}}$.
\end{notation}

\begin{construction}\label{cons}
Consider $\mathcal{S}$ to be a divisorial fan on $(Y,N)$,
such that the $\mathbb{T}$-variety $X(\mathcal{S})$ has log terminal singularities. 
By Lemma~\ref{resolution}, we can consider a resolution of singularities 
$\psi \colon Y' \rightarrow Y$, such that the pull back divisorial fan
$\psi^*(\mathcal{S})$ is simple normal crossing.
Then, by Lemma~\ref{toroidal}, we know that the $\mathbb{T}_N$-variety 
$\widetilde{X}(\psi^*(\mathcal{S}))$ is toroidal.
Since $\widetilde{X}(\psi^*(\mathcal{S}))$ has toroidal singularities it is log terminal (see, e.g. ~\cite[Theorem 11.4.24]{CLS}).
Therefore by ~\cite[Theorem 1.1]{Tak}, we know that the birational contraction
$r \colon \widetilde{X}(\psi^*(\mathcal{S})) \rightarrow X(\mathcal{S})$ 
induces an isomorphism of fundamental groups
\[
r_* \colon \pi_1( \widetilde{X}(\psi^*(\mathcal{S})) ) \simeq \pi_1(X(\mathcal{S})).
\]
So we can assume, without loss of generality,
that the divisorial fan of $X = X(\mathcal S)$
is simple normal crossing and $\widetilde X = X$.
We now propose a procedure to describe the 
fundamental group of such a variety $X$.
Let $V\subseteq Y$ be the trivial open set 
of the divisorial fan $\mathcal{S}$,
so that we have an isomorphism
\[
 \pi^{-1}(V) \simeq V\times  X(\Sigma(\mathcal S)),
\]
where $X(\Sigma(\mathcal S))$ is the general 
fiber of $\pi\colon X\to Y$, defined by the divisorial 
fan $\Sigma(\mathcal S)$.
By ~\cite[Theorem 12.1.5]{CLS} the inclusion
$\pi^{-1}(V)\to X$ induces a surjection of
fundamental groups.
By ~\cite[Theorem 12.1.10]{CLS} the fundamental
group of the toric variety $X_0$ is isomorphic
to $N(\Sigma(\mathcal S))$. Thus the above surjection
becomes
\[
\pi_1(V) \times N(\Sigma(\mathcal S)) \rightarrow \pi_1( X).
\]
Our task now is to describe the kernel of the
above homomorphism.
Given a codimension $r := \codim(Z)$ 
strata $Z$ of the divisorial fan $\mathcal{S}$,
let $W_Z$ be a formal neighborhood of $Z$
and let $V_Z := W_Z\cup V$.
We may assume without loss of generality that
if $\overline{Z}\cap \overline{Z'} =\emptyset$
then $W_{Z}\cap W_{Z'}=\emptyset$,
and if $\overline{Z}\supset \overline{Z'}$
then $W_{Z}\supset W_{Z'}$.

\begin{lemma}\label{surj}
Let $U\subseteq Y$ be an open subset,
then $\pi_*\colon \pi_1(\pi^{-1}(U))
\to \pi_1(U)$ is surjective.
\end{lemma}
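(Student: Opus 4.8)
The plan is to localize the assertion to the trivial open set of $\mathcal{S}$, over which $\pi$ manifestly admits a section, and then to invoke that removing from a manifold a closed subset of real codimension at least two does not affect surjectivity on $\pi_1$. Throughout I work in the situation arranged in Construction~\ref{cons}: $Y$ is smooth, $\mathcal{S}$ is simple normal crossing, and $\pi\colon X=\widetilde{X}(\mathcal{S})\to Y$ is the good quotient, for which $\pi^{-1}(V)\simeq V\times X(\Sigma(\mathcal{S}))$, where $V:=\triv(\mathcal{S})$; as in the proof of Lemma~\ref{resolution} I may assume $\loc(\mathcal{S})=Y$, so that $V$ is open and dense in $Y$ with complement $Y\setminus V=\supp(\mathcal{S})$ a finite union of prime divisors. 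We may also assume $U$ is connected.

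First I would put $U^\circ:=U\cap V$, so that $U\setminus U^\circ=U\cap\supp(\mathcal{S})$ is a closed analytic subset of the complex manifold $U$ of real codimension at least two. A standard general-position argument then shows that the inclusion $U^\circ\hookrightarrow U$ is surjective on $\pi_1$: choosing the basepoint in $U^\circ$, any loop in $U$ is homotopic rel basepoint to a smooth loop transverse to a smooth stratification of $U\setminus U^\circ$, and by the dimension count $1+(\dim_{\mathbb{R}}U-2)<\dim_{\mathbb{R}}U$ such a loop misses $U\setminus U^\circ$ altogether. In particular $U^\circ$ is connected, and I fix a basepoint $y_0\in U^\circ$.

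Next, over $U^\circ$ the morphism $\pi$ admits the section $s\colon U^\circ\to\pi^{-1}(U^\circ)$, $y\mapsto(y,x_0)$, where $x_0\in X(\Sigma(\mathcal{S}))$ is any fixed point (for instance the unit of the open torus orbit); this is well defined thanks to the product description $\pi^{-1}(V)\simeq V\times X(\Sigma(\mathcal{S}))$. Since $\pi\circ s=\id_{U^\circ}$, the homomorphism $\pi_*\colon\pi_1(\pi^{-1}(U^\circ),s(y_0))\to\pi_1(U^\circ,y_0)$ is split surjective.

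Finally I would combine the two steps through the commutative square of fundamental groups induced by $\pi$ and by the inclusions $U^\circ\subseteq U$ and $\pi^{-1}(U^\circ)\subseteq\pi^{-1}(U)$. The composite $\pi_1(\pi^{-1}(U^\circ))\to\pi_1(U^\circ)\to\pi_1(U)$ is surjective, being the composition of the two surjections obtained above; since it equals $\pi_1(\pi^{-1}(U^\circ))\to\pi_1(\pi^{-1}(U))\xrightarrow{\pi_*}\pi_1(U)$, the map $\pi_*\colon\pi_1(\pi^{-1}(U))\to\pi_1(U)$ is surjective, as claimed. The only step needing any care is the first, and even there the argument is routine because $Y$, and hence $U$, is smooth; the rest is formal.
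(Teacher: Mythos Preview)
Your proof is correct and follows essentially the same route as the paper: restrict to $U\cap V$, use the product description $\pi^{-1}(V)\simeq V\times X(\Sigma(\mathcal{S}))$ there, and chase the commutative square. The only cosmetic difference is that the paper cites \cite[Theorem 12.1.5]{CLS} for the surjectivity of both inclusion-induced maps, whereas you give a direct transversality argument for $\pi_1(U^\circ)\to\pi_1(U)$ and then factor through $\pi_1(\pi^{-1}(U))$ so as to avoid invoking surjectivity of the top inclusion altogether.
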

\begin{proof}
Let $V\subseteq Y$ as usual the open subset
over which $\pi\colon X\to Y$ is trivial.
We have a commutative diagram
\[
 \xymatrix{
  \pi_1(\pi^{-1}(U\cap V))\ar[r]\ar[d]^-{\pi_*} 
  & \pi_1(\pi^{-1}(U))\ar[d]^-{\pi_*}\\
  \pi_1(U\cap V)\ar[r]
  & \pi_1(U)
 }
\]
where the non-labelled arrows are induced by 
the inclusion, and thus are surjections by
~\cite[Theorem 12.1.5]{CLS}. By the triviality
of $\pi$ over $V$ we deduce that the left hand 
side pushforward is surjective (projection onto
the first factor), so that the second pushforward 
must be surjective as well.
\end{proof}

\begin{lemma}\label{iso}
If $N(\Sigma(\mathcal S,Z))$ is trivial and 
$U\subseteq W_Z$ is a tubular neighborhood 
of the non-empty intersection $U\cap Z$, then 
$\pi_*\colon \pi_1(\pi^{-1}(U))\to \pi_1(U)$
is an isomorphism.
\end{lemma}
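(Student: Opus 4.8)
The plan is to use the topological fibration structure described in Remark~\ref{ngbhd} together with the long exact sequence of homotopy groups of a fibration. Since $U\subseteq W_Z$ is a tubular neighborhood of $U\cap Z$, the retraction $W_Z\to Z$ restricts to a retraction $U\to U\cap Z$ realizing $U$ as (homotopy equivalent to) a disc bundle over $U\cap Z$; in particular the inclusion $U\cap Z\hookrightarrow U$ is a homotopy equivalence, so $\pi_1(U)\simeq\pi_1(U\cap Z)$. By Remark~\ref{ngbhd}, the composition of the good quotient $\pi^{-1}(U)\to U$ with this retraction makes $\pi^{-1}(U)$ into a topological fibration over $U\cap Z$ with fiber the toric variety $X(\Sigma(\mathcal{S},Z))$. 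Its long exact sequence of homotopy groups reads
\[
 \pi_2(U\cap Z)\to \pi_1\big(X(\Sigma(\mathcal{S},Z))\big)\to \pi_1(\pi^{-1}(U))\to \pi_1(U\cap Z)\to \pi_0\big(X(\Sigma(\mathcal{S},Z))\big).
\]

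Next I would feed in the toric input. By~\eqref{toricfund} we have $\pi_1(X(\Sigma(\mathcal{S},Z)))\simeq N(\Sigma(\mathcal{S},Z))$, which is trivial by hypothesis, and $X(\Sigma(\mathcal{S},Z))$ is connected (it is an irreducible toric variety), so $\pi_0$ of the fiber is trivial as well. Plugging these into the exact sequence forces the map $\pi_1(\pi^{-1}(U))\to \pi_1(U\cap Z)$ to be an isomorphism. Composing with the identification $\pi_1(U\cap Z)\simeq\pi_1(U)$ coming from the retraction, and checking that this composite agrees with the pushforward $\pi_*$ induced by $\pi\colon\pi^{-1}(U)\to U$ (which it does, since $\pi$ followed by the deformation retraction of $U$ onto $U\cap Z$ is exactly the projection of the fibration), we conclude that $\pi_*\colon\pi_1(\pi^{-1}(U))\to\pi_1(U)$ is an isomorphism.

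The surjectivity half is in any case already guaranteed by Lemma~\ref{surj}, so the genuinely new content is injectivity, i.e. the statement that $\pi_1$ of the fiber injects into—and in fact dies inside—$\pi_1(\pi^{-1}(U))$ exactly when $N(\Sigma(\mathcal{S},Z))$ is trivial. The main obstacle I anticipate is justifying rigorously that $\pi^{-1}(U)$ really is a fiber bundle (or at least a fibration) over $U\cap Z$ in the required topological sense: Remark~\ref{ngbhd} asserts this via the formal/analytic local model $X(\Sigma(\mathcal{S},Z))\times Z$, but to apply the homotopy exact sequence one must know the local triviality holds over all of the tubular neighborhood $U\cap Z$, not merely formally at points. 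One way to sidestep delicate bundle arguments is to cover $U\cap Z$ by contractible opens over which $\pi^{-1}$ is a product (by the local model), observe that over each such open the pushforward is the first-factor projection—hence has kernel $\pi_1(\text{fiber})=N(\Sigma(\mathcal{S},Z))$, which is trivial—and then glue using a Mayer–Vietoris/van Kampen argument, exactly as in the surjectivity proof of Lemma~\ref{surj}. Either route reduces the lemma to the triviality of $N(\Sigma(\mathcal{S},Z))$ together with the toric computation~\eqref{toricfund}.
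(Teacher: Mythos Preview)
Your proposal is correct and follows essentially the same approach as the paper: retract $U$ onto $U\cap Z$, use Remark~\ref{ngbhd} to view $\pi^{-1}(U)\to U\cap Z$ as a fibration with toric fiber, feed the vanishing $\pi_1(\text{fiber})\simeq N(\Sigma(\mathcal S,Z))=0$ into the long exact homotopy sequence to get injectivity of $\pi_*$, and invoke Lemma~\ref{surj} for surjectivity. The only minor difference is that the paper describes the fiber as an open toric subvariety of $X(\Sigma(\mathcal S,Z))$ \emph{containing a fixed point} (hence simply connected regardless), whereas you take the fiber to be the full $X(\Sigma(\mathcal S,Z))$; either description yields trivial $\pi_1$ and the argument goes through unchanged.
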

\begin{proof}
Let $\rho\colon U\to U\cap Z$ be a retraction
whose fibers are isomorphic to an open toric subvariety $\mathbb A^r$.
By Remark~\ref{ngbhd}, the composition
$\pi\circ\rho\colon\pi^{-1}(U)\to U\cap Z$ is a fibration
with fiber an open toric subvariety of $X(\Sigma(\mathcal{S},Z))$ containing a fixed point. Passing to 
the long exact sequence of homotopy groups
and recalling the isomorphism 
$\pi_1(X(\Sigma(\mathcal S,Z))\simeq 
N(\Sigma(\mathcal{S},Z))$ we get the following
exact sequence
\[ \xymatrix@1{
  \cdots\ar[r]
  & N(\Sigma(\mathcal{S},Z))\ar[r]
  & \pi_1(\pi^{-1}(W_Z))\ar[r]^-{\pi_*\circ\rho_*} 
  & \pi_1(Z)\ar[r]
  & 1
 }
\]In particular if $N(\Sigma(\mathcal{S},Z))$ is trivial
then $\pi_*\circ\rho_*$ is an isomorphism, so that
$\pi_*\colon \pi_1(\pi^{-1}(U))\to \pi_1(U\cap Z)$
is injective. We conclude by Lemma~\ref{surj}.
\end{proof}

\begin{remark}\label{fibrations}
Let $\rho\colon W_Z\to Z$ be a retraction
whose fibers are isomorphic to $\mathbb A^r$.
By Remark~\ref{ngbhd}, the composition
$\pi\circ\rho\colon\pi^{-1}(W_Z)\to Z$ is a fibration
with fiber $X(\Sigma(\mathcal{S},Z))$.
Moreover the fibers of the restriction of $\rho$ to 
$W_Z\cap V$ are isomorphic to $\mathbb A^r-\{0\}_r$,
so that the fibers of the restriction of $\pi\circ\rho$ to 
$\pi^{-1}(W_Z\cap V)$ are isomorphic to 
$(\mathbb{A}^r_{\mathbb{C}} -\{0\}_r)
\times X(\Sigma(\mathcal{S}))$,
which are homotopic to $S^{2r-1} \times X(\Sigma(\mathcal{S}))$.
We have a commutative diagram
\[
 \xymatrix@C=20pt{
  \mathbb A^r\setminus\{0\}_r\ar[r] 
  & W_Z\cap V\ar[r]^-\rho 
  & Z\\
  (\mathbb A^r\setminus\{0\}_r)\times X(\Sigma(\mathcal S))\ar[r]\ar[u]^-\pi\ar[d]
  & \pi^{-1}(W_Z\cap V)\ar[r]^-{\pi\circ\rho}\ar[u]^-\pi\ar[d] 
  & Z\ar[u]\ar[d]\\
  X(\Sigma(\mathcal{S},Z))\ar[r]
  & \pi^{-1}(W_Z)\ar[r]^-{\pi\circ\rho} 
  & Z
 }
\]
where the non-labelled arrows are inclusions.
Passing to fundamental groups, recalling
that $S^{2r-1}$ is a strong deformation retract
of $\mathbb A^r\setminus\{0\}_r$, recalling
the isomorphism $\pi_1(X(\Sigma(\mathcal S))
\simeq N(\Sigma(\mathcal S))$ and the isomorphism
$\pi_1(X(\Sigma(\mathcal S,Z))\simeq 
N(\Sigma(\mathcal{S},Z))$
we obtain the following commutative diagram
\begin{equation}
\label{diagram}
\begin{gathered}
 \xymatrix@C=20pt{
  \pi_1(S^{2r-1})\ar[r] 
  & \pi_1(W_Z\cap V)\ar@{->>}[r]^-{\rho_*}
  & \pi_1(Z)\\
  \pi_1(S^{2r-1})\times N(\Sigma(\mathcal S))\ar[r]\ar[u]^-{\pi_*}\ar[d]_-{\alpha_Z}
  & \pi_1(\pi^{-1}(W_Z\cap V))\ar@{->>}[r]^-{\pi_*\circ\rho_*}\ar[u]^-{\pi_*}\ar[d]_-{\beta_Z}
  & \pi_1(Z)\ar[u]\ar[d]\\
  N(\Sigma(\mathcal{S},Z))\ar[r]
  & \pi_1(\pi^{-1}(W_Z))\ar@{->>}[r]^-{\pi_*\circ\rho_*} 
  & \pi_1(Z)
 }
\end{gathered}
\end{equation}
Moreover each row in the above diagram is
part of the long exact sequence of homotopy
groups induced by a fibration, so that the last
map of each row is a surjection.
If $r>1$ then $\alpha_Z$
is the surjection $N(\Sigma(\mathcal S))\rightarrow N(\Sigma(\mathcal{S},Z))$
induced by the inclusion of lattices $N_{\Sigma(\mathcal{S})} \hookrightarrow N_{\Sigma(\mathcal{S},Z)}$.
If $r=1$, the generator of $\pi_1(S^1)\simeq \zz$
is the loop corresponding to $(e_1,0)$ in the notation 
of Remark~\ref{ngbhd}.
Observe that in both cases $\alpha_Z$
is a homomorphism of abelian groups.
\end{remark}

\begin{remark}
\label{locglob}
The following commutative diagram of inclusions
\begin{equation}\nonumber
  \xymatrix@C=20pt{
  \pi^{-1}(W_Z \cap V)  \ar[d]\ar[r] & \pi^{-1}(W_Z) \ar[d]    \\
   \pi^{-1}(V) \ar[r] &  \pi^{-1}(V_Z) }
\end{equation}
induces a commutative pushout diagram of 
fundamental groups by the Seifert-van Kampen 
theorem. Using the triviality of $\pi$ over $V$ 
we have a pushout diagram
\begin{equation}\label{vankampen}
\begin{gathered}
  \xymatrix@C=20pt{
  \pi_1(W_Z \cap V)\times N(\Sigma(\mathcal S))  \ar[d]_-{\imath_*\times {\rm id}}\ar[r]^-{\beta_Z} &  \pi_1(\pi^{-1}(W_Z)) \ar[d]    \\
 \pi_1(V) \times N(\Sigma(\mathcal S)) \ar[r]_-{\gamma_Z} &  \pi_1( \pi^{-1}(V_Z)) }
\end{gathered}
\end{equation}
where $\imath\colon W_Z \cap V \to V$ 
is the inclusion.
\end{remark}
\end{construction}

\section{Applications}
\label{sec:app}

\subsection{Simply connectedness of the spectrum of the Cox ring}
\label{subsec:fixed}

The aim of this subsection is to use Construction~\ref{cons} to prove Theorem~\ref{Thm1} and Corollary~\ref{Cor1}.

\begin{remark}\label{ind}
Theorem~\ref{Thm1} is independent of the choice of the resolution of singularities of the Chow quotient of $X$.
Indeed, let $Y_1$ and $Y_2$ be two resolution of singularities of the Chow quotient of $X$.
Then, using resolution of singularities we can find a common resolution $Y'$ of $Y_1$ and $Y_2$, 
so by ~\cite[Theorem 1.1]{Tak} we deduce that $\pi_1(Y')\simeq \pi_1(Y_2)$ and $\pi_1(Y')\simeq \pi_1(Y_1)$, concluding the claim.
\end{remark}

\begin{remark}\label{lc}
We point out that the log terminal condition in Theorem~\ref{Thm1} cannot be weakened:
let $H=\mathcal{O}_{\mathbb{P}^2}(1)|_C$ be an ample divisor on a plane elliptic curve $C\subset \mathbb{P}^2$, and let
\[
X := \Spec\left( \oplus_{m\in \mathbb{Z}_{\geq 0}} H^0(Y, \mathcal{O}_Y(mH)) \right).
\]
Then $X$ is a $\mathbb{T}$-variety of complexity one with an isolated log canonical singularity at the vertex,
and $X$ is contractible. Therefore, we have that $\pi_1(X)$ is trivial, while its Chow quotient $C$ has
non-trivial fundamental group.
\end{remark}

\begin{proof}[Proof of Theorem~\ref{Thm1}]
We use the notation of Construction~\ref{cons}.
Since $X$ admits a good $\mathbb{T}$-action, 
there is a $\mathbb{T}$-equivariant isomorphism
$X(\mathcal{D})\simeq X$, where $\mathcal{D}$ 
is a pp-divisor on $(Y,N)$, the variety $Y$ is projective,
and $\sigma(\mathcal{D}) \subset N_\qq$ is a full-dimensional cone (see, e.g.,~\cite[Section 4]{LS}).
Let $\psi \colon Y' \rightarrow Y$ be a resolution of singularities of $Y$
and let $\psi^*(\mathcal{D})$ be a pull back of the pp-divisor to $Y'$.
By Remark~\ref{ind}, it suffices to prove that the good quotient 
$\pi \colon \widetilde{X}(\psi^*(\mathcal{D})) \rightarrow Y'$ 
induces an isomorphism of the fundamental groups.
Also by ~\cite[Theorem 1.1]{Tak} 
the birational contraction $r\colon
\widetilde{X}(\psi^*(\mathcal{D}))\to X(\mathcal{D})$ 
induces an isomorphism of fundamental groups.
Thus from now on, without loss of generality, we 
will assume
\[
 X = \widetilde{X}(\psi^*(\mathcal{D}))
\]
and $Y$ smooth.
Observe that since $\sigma(\mathcal{D})$ is full-dimensional, 
the group $N(\sigma(\mathcal{D}))$ is trivial.
In particular, the group $N(\sigma(\mathcal{D},Z))$ 
is trivial for each strata $Z$ of $\mathcal{D}$, so
that
\[
 N(\Sigma(\mathcal S)) = N(\Sigma(\mathcal{S},Z)) = 0.
\]
Thus for every strata $Z$ of $\psi^*(\mathcal{D})$,
the map $\pi_*\colon\pi_1(\pi^{-1}W_Z))\to \pi_1(W_Z)$
is an isomorphism by Lemma~\ref{surj}.
By Remark~\ref{locglob} and the unicity
of pushout, the pushforward
\begin{equation}
\label{pi-isom}
 \pi_*\colon\pi_1(\pi^{-1}(V_Z))\to \pi_1(V_Z)
\end{equation}
is an isomorphism. 
Now, let 
\[
V_{Z_1,\dots, Z_k}:= V_{Z_1} \cup \dots \cup V_{Z_k}
\qquad
W_{Z_1,\dots,Z_k}:= W_{Z_1}\cup \dots \cup W_{Z_k}
\]
be the union of the open sets corresponding to 
$k$ different strata. 
The following is a pushout diagram by the
Seifert-van Kampen theorem.
\[
 \xymatrix@C=20pt{
  \pi_1( \pi^{-1}( W_Z \cap W_{Z_1,\dots,Z_k} \cap V)) \ar[d] \ar[r] 
  & \pi_1(\pi^{-1}(V)) \ar[d]    \\
   \pi_{1}(\pi^{-1}(W_Z\cap W_{Z_1,\dots,Z_k})) \ar[r] &  \pi_{1}(\pi^{-1}(V_Z\cap V_{Z_1,\dots,Z_k}))
   }
\]
If we apply $\pi_*$ we get another pushout 
diagram by the same theorem. Moreover 
$\pi_*$ is an isomorphism on the left hand
side groups by Lemma~\ref{iso}, and is an 
isomorphism on the top-right group by the
triviality of $N(\Sigma(\mathcal S))$ and the fact
that $\pi^{-1}(V)\simeq V\times 
X(\Sigma(\mathcal S))$. By the unicity
of pushouts we deduce that 
\begin{equation}\label{intersection}
 \pi_*\colon \pi_1( \pi^{-1}( V_Z \cap V_{Z_1,\dots,Z_k})) \rightarrow 
 \pi_1( V_Z\cap V_{Z_1,\dots, Z_k})
\end{equation}
is an isomorphism as well.
Now, we prove by induction on $k$ that 
\[
\pi_*\colon \pi_1 ( \pi^{-1}( V_{Z_1,\dots,Z_k})) \rightarrow \pi_1(V_{Z_1,\dots,Z_k}).
\]
is an isomorphism. The case $k=1$ is
~\eqref{pi-isom}.
If $k\geq 2$ we have a pushout diagram 
\begin{equation}\nonumber  \xymatrix@C=20pt{
  \pi_1(\pi^{-1}(V_{Z_1,\dots,Z_{k-1}}\cap V_{Z_k}))  \ar[d] \ar[r] & \pi_1(\pi^{-1}(V_{Z_k})) \ar[d]    \\
   \pi_{1}(\pi^{-1}(V_{Z_1,\dots, Z_{k-1}})) \ar[r] &  \pi_{1}(\pi^{-1}(V_{Z_1,\dots,Z_k})) }
\end{equation}
induced by the inclusions. By~\eqref{pi-isom},
the inductive hypothesis, the isomorphism~\eqref{intersection}, 
and the uniqueness of pushouts up to isomorphism, 
we conclude the claim.
Since $X$ can be covered by finitely many strata, 
the above argument shows that 
$\pi_* \colon \pi_1(X)\rightarrow \pi_1(Y)$
is an isomorphism, proving the statement.
\end{proof}

\begin{proof}[Proof of Corollary~\ref{Cor1}]
By ~\cite{Tak2,Xu}, and ~\cite[Corollary 1.3.2]{BCHM} 
it is known that Fano varieties are simply connected Mori dream spaces.
Moreover, the singularities of the spectrum $\overline{X}$ of the Cox ring of a Fano variety $X$ are log terminal ~\cite{GOST},
and $\overline{X}$ has a good action for the Picard torus ~\cite{AW}. 
Thus, we can apply Theorem~\ref{Thm1}, to deduce that $\pi_1(\overline{X})\simeq \pi_1(X) \simeq \{0\}$.
\end{proof}

\subsection{Rational log terminal $\mathbb{T}$-varieties of complexity one}
\label{subsec:comp1}

The aim of this section is to give a presentation
of the possible fundamental groups
of rational log terminal $\mathbb{T}$-varieties of complexity one. 

\begin{notation}\label{notation3.3}
Consider a divisorial fan $\mathcal{S}$ on $(Y,N)$.
For each $\mathcal{D}\in \mathcal{S}$ and $p\in Y$
denote by $\mathcal Q(\mathcal D,p)$ a basis
of the lattice $N_{\sigma(\mathcal{D},p)}\subseteq
\zz \times N$ and by 
\[
 \mathcal{B}(\mathcal{D},p) 
 := \left\{\left( \frac{v_2}{v_1},\dots, \frac{v_{k+1}}{v_1} \right)\in N_\qq
 \mid (v_1,\dots, v_{k+1})\in \mathcal Q(\mathcal D,p)\right\}.
\]
Given a point $v\in N_\qq$, we will denote by $\mu(v)$ the smallest positive integer
such that $\mu(v)v\in N$. 
Observe that $\mu(v)\leq v_1$ for $v\in \mathcal{B}(\mathcal{D},p)$.
\end{notation}

\begin{theorem}\label{comp1fund}
Let $\mathcal{S}$ be a divisorial fan on $(\pp^1,N)$,
assume that $X(\mathcal{S})$ has log terminal singularities
and let $\{p_1,\dots, p_{r} \} \subsetneq Y$ be the complement 
of the trivial locus of $\mathcal{S}$.
Then $\pi_1(X(\mathcal{S}))$ admits a presentation
with generators
\[
b_1,\dots, b_{r}, t_1,\dots, t_k, 
\]
where $k$ is the rank of the acting torus,
and relations
\begin{align*}
& b_1\cdots b_r\\
& [t_i,t_j] & \text{ for any $i,j\in\{1,\dots,k\}$}\\
& [t_i,b_j] & \text{ for } i\in \{1,\dots, k\} \text{ and } j\in \{1\dots, r\}, \\
& \mathcal{R}( \Sigma(\mathcal{S}) ) & \text{ where $\Sigma(\mathcal{S})$ is the tail fan of $\mathcal{S}$,}  \\
& t^{\mu(v)v} b_j^{\mu(v)}& \text{ for each $j\in \{1,\dots, r\}, v\in \mathcal{B}(\mathcal{D},p_j)$ and $\mathcal{D}\in \mathcal{S}$.} 
\end{align*}
\end{theorem}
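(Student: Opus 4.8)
The plan is to build the presentation of $\pi_1(X(\mathcal{S}))$ by running the machinery of Construction~\ref{cons} over the base $Y=\pp^1$, where the strata of $\mathcal{S}$ are extremely simple: the trivial open set $V=\pp^1\setminus\{p_1,\dots,p_r\}$ together with the $r$ codimension-one (in fact zero-dimensional) strata $Z_j=\{p_j\}$. First I would reduce, via Lemma~\ref{resolution} and Construction~\ref{cons}, to the simple normal crossing case with $\widetilde X=X$; on $\pp^1$ this costs nothing since $\pp^1$ is already smooth and any finite set of points is snc, so no blow-up is needed. Then $\pi_1(V)$ is the free group on loops $b_1,\dots,b_r$ around the punctures subject only to $b_1\cdots b_r=1$ (the standard presentation of $\pi_1$ of a punctured sphere), and $\pi^{-1}(V)\simeq V\times X(\Sigma(\mathcal{S}))$ gives $\pi_1(\pi^{-1}(V))\simeq \pi_1(V)\times N(\Sigma(\mathcal{S}))$. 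Choosing a basis $t_1,\dots,t_k$ of $N$, the group $N(\Sigma(\mathcal{S}))$ contributes the generators $t_1,\dots,t_k$ with the relations $[t_i,t_j]$ and $\mathcal{R}(\Sigma(\mathcal{S}))$ by~\eqref{pi1-toric}, and the product structure forces $[t_i,b_j]$.

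Next I would analyze the local contribution at each $p_j$. Here $r=\codim(Z_j)=1$, so Remark~\ref{ngbhd} says that a formal neighborhood of $\pi^{-1}(p_j)$ looks like $X(\Sigma(\mathcal{S},p_j))\times \{p_j\}$, where $\Sigma(\mathcal{S},p_j)$ is the fan in $N'=\zz\times N$ of cones $\sigma(\mathcal{D},p_j)=\langle(0,\sigma(\mathcal D)),(e_1,\Delta_{D}^{(p_j)})\rangle$ for $\mathcal D\in\mathcal S$. The key point is that $\pi_1(\pi^{-1}(W_{Z_j}))\simeq N(\Sigma(\mathcal{S},p_j))$, and in the van Kampen pushout~\eqref{vankampen} the generator of $\pi_1(S^1)\simeq\zz$ (the $r=1$ case of Remark~\ref{fibrations}) maps to $b_j$, i.e.\ to the class $(e_1,0)\in N'$, while the factor $N(\Sigma(\mathcal{S}))$ maps identically onto its image. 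So gluing $\pi^{-1}(V)$ to $\pi^{-1}(W_{Z_j})$ amounts to imposing on the free product $\langle b_1,\dots,b_r,t_1,\dots,t_k\rangle/(\text{already-found relations})$ exactly the relations that present $N(\Sigma(\mathcal{S},p_j))$ as a quotient of $\zz\langle e_1\rangle\oplus N$ — namely, by~\eqref{pi1-toric} applied in $N'$, the monomials $(e_1,0)^{n_0}(t_1,0)^{n_1}\cdots(t_k,0)^{n_k}$ where $(n_0,n_1,\dots,n_k)$ ranges over bases of $N_{\sigma(\mathcal D,p_j)}$. Writing such a lattice basis element as $(v_1,v_1 w_2,\dots,v_1 w_{k+1})$ for $(w_2,\dots,w_{k+1})=v\in\mathcal{B}(\mathcal{D},p_j)$ and unraveling Notation~\ref{notation3.3} (so $v_1=\mu(v)$ up to the $\mathcal{R}(\Sigma(\mathcal S))$-relations), this is precisely the relation $t^{\mu(v)v}b_j^{\mu(v)}$; the relations coming from $(0,\sigma(\mathcal D))$-faces give back $\mathcal{R}(\Sigma(\mathcal{S}))$, which we already have.

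Finally I would assemble everything by induction on $j$: set $V_0=V$ and $V_j=V_{j-1}\cup W_{Z_j}$, and at each step apply Seifert--van Kampen to the pushout of $\pi_1(\pi^{-1}(V_{j-1}))$ and $\pi_1(\pi^{-1}(W_{Z_j}))$ along $\pi_1(\pi^{-1}(W_{Z_j}\cap V))$, exactly as in the proof of Theorem~\ref{Thm1}. Since the $p_j$ are distinct points the neighborhoods $W_{Z_j}$ are disjoint, so each new generator $b_j$ and each new batch of relations $t^{\mu(v)v}b_j^{\mu(v)}$ is introduced cleanly without interacting with the previous steps, and after $r$ steps $V_r$ covers all of $\pp^1$ (hence $\pi^{-1}(V_r)=X$). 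Collecting the generators and relations produced at each stage yields exactly the asserted presentation.

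I expect the main obstacle to be the bookkeeping in the local step: matching the abstract van Kampen relations coming from $\pi_1(\pi^{-1}(W_{Z_j}))\to\pi_1(\pi^{-1}(V_{j-1}))$ with the concrete monomials $t^{\mu(v)v}b_j^{\mu(v)}$. This requires carefully tracking which loop in $\pi^{-1}(W_{Z_j}\cap V)$ corresponds to $(e_1,0)$ versus the torus directions $(0,t_i)$, verifying via Remark~\ref{ngbhd} and~\eqref{pi1-toric} that a lattice basis $(v_1,\dots,v_{k+1})$ of $N_{\sigma(\mathcal D,p_j)}$ projects to a relator of the claimed shape, and checking that the integer $v_1$ may be normalized to $\mu(v)$ modulo the relators in $\mathcal{R}(\Sigma(\mathcal S))$ — using the inequality $\mu(v)\le v_1$ from Notation~\ref{notation3.3}. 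Everything else is a direct transcription of the arguments already used for Theorem~\ref{Thm1}, specialized to $Y=\pp^1$.
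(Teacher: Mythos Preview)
Your proposal is correct and follows essentially the same approach as the paper's own proof: reduce via Construction~\ref{cons}, compute $\pi_1(\pi^{-1}(V))\simeq\pi_1(V)\times N(\Sigma(\mathcal S))$ using the product structure over the trivial locus, identify $\pi_1(\pi^{-1}(W_{p_j}))\simeq N(\Sigma(\mathcal S,p_j))$ via Remark~\ref{ngbhd}, determine the kernel of $\alpha_j\colon\zz\times N(\Sigma(\mathcal S))\to N(\Sigma(\mathcal S,p_j))$ in terms of the sets $\mathcal B(\mathcal D,p_j)$, and then glue by iterated Seifert--van Kampen exactly as in~\eqref{vankampen}. The paper phrases the key identification slightly differently---it asserts directly that $v\mapsto(\mu(v),\mu(v)v)$ is a bijection between $\mathcal B(\mathcal D,p_j)$ and a basis of $N_{\sigma(\mathcal D,p_j)}$, whereas you (arguably more carefully) note that $v_1$ agrees with $\mu(v)$ only modulo $\mathcal R(\Sigma(\mathcal S))$---but the substance is identical.
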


\begin{proof}
We use the notation of Construction~\ref{cons}.
Let 
\[
\mathcal{G}(V)  := \{ b_1,\dots, b_{r}, t_1,\dots, t_k \}
\]
and let 
\[
\mathcal{R}(V) := \mathcal{R}(\Sigma(\mathcal S)) \cup
\{b_1\dots b_r\} \cup\{[t_i,t_j] \text{ for any $i,j$}\}\cup\{[t_i,b_j] \text{ for any $i,j$}\}.
\]
By the triviality of $\pi$ over $V$, the
formula~\eqref{pi1-toric} and the fact 
that $V = \mathbb P^1\setminus
\{p_1,\dots,p_r\}$, we have that
\[
 \pi_1(\pi^{-1}(V)) \simeq  \langle \mathcal{G}(V) \mid \mathcal{R}(V) \rangle.
\]
Observe that the stratification of $\pp^1$ induced by $\mathcal{S}$ is given by the sets $V$ and $p_1,\dots, p_{r}$.
Moreover, for any $j\in \{1,\dots, r\}$,
by~\eqref{diagram}, with $r=1$ and
$Z = p_j$, there is a commutative diagram
\[
 \xymatrix@C=20pt{
  \mathbb Z\times N(\Sigma(\mathcal S))\ar[r]^-\simeq\ar[d]_-{\alpha_j}
  & \pi_1(\pi^{-1}(W_{p_j}\cap V))\ar[d]_-{\beta_j}\\
  N(\Sigma(\mathcal{S},p_j))\ar[r]^-\simeq
  & \pi_1(\pi^{-1}(W_{p_j}))
 }
\]
whose horizontal arrows are isomorphisms
and the vertical arrows are surjections.
Recall that $N(\Sigma) := N/N_\Sigma$,
as defined in Notation~\ref{lattice}.
The homomorphism $\alpha_j$
is induced by the toric embedding
\[
 \cc^* \times X(\Sigma(\mathcal{S})) \rightarrow X(\Sigma(\mathcal{S},p_j)), 
\]
which is induced by the fan inclusion $0\times \Sigma(\mathcal{S}) \hookrightarrow \Sigma(\mathcal{S},p_j)$.
Thus, by~\cite[Theorem 12.1.10]{CLS},
$\alpha_j$ is induced by the inclusion of lattices 
$0\times N_{\Sigma(\mathcal{S})} \hookrightarrow N_{\Sigma(\mathcal{S},p_j)}$
so that
\[
 \ker(\alpha_j) = \frac{N_{\Sigma(\mathcal S,p_j)}}{0\times N_{\Sigma(\mathcal S)}}.
\]
The lattice $N_{\Sigma}$ is generated by the 
integral points of $\Sigma$. Thus $\ker(\alpha_j)$
is generated by a basis of lattice points of 
$N_{\Sigma(\mathcal S,p_j)}$.
If we denote by $b_j$ a generator of the 
$\mathbb Z$ group in the domain of $\alpha_j$,
then the kernel of $\alpha_j$ is generated
by the set
\[
\mathcal{B}_j:=\{b_j^{\mu(v)}t^{\mu(v)v} \mid \text{for each $v\in \mathcal{B}(\mathcal{D},p_j)$ and $\mathcal{D}\in \mathcal{S}$}\}
\]
because, by Notation~\ref{notation3.3},
the map $v\mapsto (\mu(v),\mu(v)v)$ 
is a bijection between $\mathcal{B}(\mathcal{D},p_j)$ 
and a basis of $N_{\sigma(\mathcal{D},p_j)}$.
Thus we have a presentation
\[
\pi_1( \pi^{-1}(W_{p_j})) \simeq \langle b_j, t_1,\dots, t_k \mid \mathcal{R}(\Sigma(\mathcal{S})), \mathcal{B}_j \rangle.
\]
Observe that we have a pushout diagram 
\begin{equation}\nonumber
  \xymatrix@C=20pt{
  \mathbb Z \times N(\Sigma(\mathcal S))  \ar[d]^{i\times {\rm id}} \ar[r]^-{\alpha_j} 
  &  N(\Sigma(\mathcal{S},p_j)) \ar[d]    \\
  \pi_1(V) \times N(\Sigma(\mathcal S)) \ar[r]^{\gamma_j} &  \pi_1( \pi^{-1}(V_{p_j})) }
\end{equation}
where the homomorphism $i$ is the inclusion 
$\langle b_j \rangle \hookrightarrow \langle b_1,\dots, b_{r}\rangle$.
Thus the kernel of $\gamma_j$ is the smallest normal subgroup 
of $\pi_1(V)\times N(\Sigma(\mathcal S))$ containing 
$(i\times {\rm id})(\ker(\alpha_j))$.
So, we obtain the following presentation 
\[
 \pi_1(\pi^{-1}(V_{p_j}))
 \simeq
 \langle 
  \mathcal{G}(V) \mid \mathcal{R}(V), \mathcal{B}_j 
 \rangle.
\] 
By repeatedly applying the Seifert-van Kampen 
Theorem we conclude that the fundamental group 
of $X$ is 
\[
 \pi_1(X) 
 \simeq 
 \langle \mathcal{G}(V) \mid \mathcal{R}(V), \mathcal{B}_1,\dots, \mathcal{B}_{r} \rangle, 
\]
proving the statement.
\end{proof}

\begin{proof}[Proof of Corollary~\ref{Cor2}]
In this case, the pp-divisor can be written as 
\[
\mathcal{D}= \sum_{i=1}^{r+1}\left\{ \frac{e_i}{m_i} \right\} \otimes p_i 
\in
{\rm Pol}(\{0\}) \otimes_{\mathbb{Z}_{\geq 0}} \CaDiv(\pp^1)_\qq,
\]
since the tail cone of $\mathcal{D}$ is $\{0\}$ in $N_\qq \simeq \qq$.
Hence, we conclude by Theorem~\ref{comp1fund}.
Indeed, observe that in this case we have 
a single pp-divisor $\mathcal{D}$ and for every 
$p_i$ we have that the lattice $N_{\sigma(\mathcal{D},p_i)}$
is the lattice of $\zz\times N$ generated by 
\[
\mathcal{Q}(\mathcal{D},p_i) = \{ (m_i, e_i)\},
\]
and therefore we have
\[
\mathcal{B}(\mathcal{D},p_i)= \left\{ \frac{e_i}{m_i}\right\}.
\]
\end{proof}

\subsection{Finiteness of local fundamental group}
\label{subsec:finite}

In this subsection we study the finiteness of the local fundamental group of a
log terminal $\mathbb{T}$-singularity with a good torus action.

\begin{definition}
We say that the $\mathbb{T}$-variety $X$ has a trivial GIT decomposition
if there exists exactly one GIT quotient of $X$ of the expected dimension,
and moreover this coincides with the Chow quotient of $X$.
\end{definition}

\begin{proof}[Proof of Theorem~\ref{Thm2}]
The local fundamental group at the vertex is a local computation, so we may assume that the $\mathbb{T}$-variety is affine.
By ~\cite[Theorem 1]{AH05}, we have a $\mathbb{T}$-equivariantly isomorphism 
$X\simeq X(\mathcal{D})$
for some pp-divisor $\mathcal{D}$ on $(Y,N)$,
where $Y$ is the Chow quotient of $X$.
Since we are assuming that the $\mathbb{T}$-action on $X(\mathcal{D})$ is good, 
we know that $Y$ is projective
and $\sigma=\sigma(\mathcal{D})\subset N_\qq$ is a full-dimensional cone 
(see, e.g.,~\cite[Section 4]{LS}).
Moreover, by ~\cite{LS} we know that the trivial GIT decomposition implies that 
$\mathcal{D}(u)$ is an ample $\qq$-divisor for every $u\in {\rm relint}(\sigma^\vee)$.
We will write
\[
\mathcal{D}= \sum_{D\subsetneq Y} \Delta_D \otimes D, 
\]
where the sum runs over a finite set of $\qq$-Cartier $\qq$-divisors on $Y$
and $\Delta_D$ are $\sigma$-polyhedra.
For each vertex $v\in \Delta_D$ we recall that
$\mu(v)$ is the smallest positive integer such that
$\mu(v)v$ is a lattice point of $N_\qq$, by 
\[
\mu_D := \max \{ \mu(v) \mid \text {$v$ is a vertex of $\Delta_D$}\},
\]
and by
\[
B := \sum_{D\subsetneq Y} \frac{\mu_D-1}{\mu_D} D,
\]
where the sum runs over the same $\qq$-Cartier $\qq$-divisors of $\mathcal{D}$.
By ~\cite{LS}*{Theorem 4.9}, we know that the pair $(Y,B)$ is a log Fano pair.
We claim that $Y$ has log terminal singularities. Indeed, the effective divisor $B$ is $\qq$-Cartier
and $K_Y+B$ is $\qq$-Cartier, so we conclude that $K_Y$ is $\qq$-Cartier, therefore $Y$ is $\qq$-Gorenstein.
Thus the klt property of $(Y,B)$ 
implies the log terminal property of $Y$ (see, e.g.,~\cite[Corollary 2.35]{KM98}).
Hence, by ~\cite[Theorem 1.1]{Tak} and ~\cite{Tak2} we conclude that 
$\pi_1(Y')\simeq \pi_1(Y)\simeq \{0\}$ for every resolution of singularities $\psi \colon Y'\rightarrow Y$.

Let $\psi \colon Y'\rightarrow Y$ be a 
resolution of singularities such that 
$\psi^*(\mathcal{D})$ is a pp-divisor 
with simple normal crossing support
and let $X(\mathcal S)\to \widetilde{X}(\psi^*(\mathcal{D}))$
be a toroidal resolution of singularities,
defined by a divisorial fan $\mathcal{S}$ on $Y'$.
Denote by $\phi\colon X(\mathcal{S})
\rightarrow X(\mathcal{D})$ the projective
birational morphism obtained by composition.
Therefore, we have a quotient
of smooth varieties $\pi' \colon X(\mathcal{S})\rightarrow Y'$.
By the assumption that a fiber of 
$\pi \colon X(\mathcal{D})\rightarrow Y$
over a codimension one point  
contains a smooth point,
we can take the resolution of singularities which does not blow-up such smooth points, 
so the analogous assumption holds for the morphism $\pi'$.
Moreover, the general fiber of $\pi'$ is isomorphic to 
$X(\Sigma)$, where $\Sigma$ is a simplicial 
refinement of $\sigma\subset N_\qq$.
Since $\sigma$ is full-dimensional, then
the rays of $\Sigma$ span $N_\mathbb Q$
so that, by ~\cite[Theorem 12.1.10]{CLS},
the fundamental group of 
$X(\Sigma)$ with the vertex
removed is a finite group $G(\Sigma)$.
Hence, we can apply~\cite[Lemma 1.5]{Nor}, 
to conclude that we have an exact sequence
\[
G(\Sigma) \rightarrow \pi_1( \phi^{-1}(X(\mathcal{D})-\{ x\})) \rightarrow \pi_1(Y')\rightarrow 1.
\]
Observe that by~\cite[Theorem 1.1]{Tak} we have that 
$\pi_1(Y')=\pi_1(Y)=1$, so we conclude that 
$\pi_1( \phi^{-1}(X(\mathcal{D})-\{ x\}))$ is
finite and thus $\pi_1( X(\mathcal{D})-\{x\})$ 
is finite as well, again by ~\cite[Theorem 1.1]{Tak}.
\end{proof}

\begin{corollary}
With the same assumptions than Theorem~\ref{Thm2}.
The inclusion of a general fiber of $\pi\colon X \dashrightarrow Y$ 
induces a surjection of the local fundamental group at the vertex of the general fiber 
onto the local fundamental group at the vertex $x\in X$.
\end{corollary}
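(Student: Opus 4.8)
The plan is to extract the surjection directly from the commutative diagram~\eqref{diagram} set up in Remark~\ref{fibrations}, instantiated at a codimension-one strata $Z$, together with the conclusions already obtained in the proof of Theorem~\ref{Thm2}. Recall that in that proof we reduced to the case $X = X(\mathcal{D})$ with $Y$ projective and log terminal, $\sigma=\sigma(\mathcal{D})$ full-dimensional, and we worked on a toroidal resolution $X(\mathcal{S})\to \widetilde{X}(\psi^*(\mathcal{D}))$ over a smooth $Y'$ with $\pi_1(Y')=1$. The general fiber of $\pi'\colon X(\mathcal{S})\to Y'$ is $X(\Sigma)$ for $\Sigma$ a simplicial refinement of the full-dimensional $\sigma$, so by~\cite[Theorem 12.1.10]{CLS} we have $\pi_1(X(\Sigma))\simeq N(\Sigma)=0$: the general fiber of $\pi$ is \emph{simply connected}. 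Hence its vertex-removed version $X(\Sigma)\setminus\{x_\Sigma\}$ has fundamental group the finite group $G(\Sigma)$, and the statement to prove is that the map $G(\Sigma)=\pi_1(X(\Sigma)\setminus\{x_\Sigma\})\to \pi_1(X(\mathcal{D})\setminus\{x\})$ induced by including a general fiber is surjective.

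First I would identify where "a general fiber of $\pi\colon X\dashrightarrow Y$" sits: it is a fiber of $\pi'$ over a point in the trivial open set $V\subseteq Y'$, where $\pi'$ is a product $\pi^{-1}(V)\simeq V\times X(\Sigma)$. Removing the vertex from such a fiber and including it into $X(\mathcal{D})\setminus\{x\}$, we may as well work with $\phi^{-1}(X(\mathcal{D})\setminus\{x\})$ on the resolution, since $\phi_*$ is an isomorphism on $\pi_1$ by~\cite[Theorem 1.1]{Tak} (this is exactly how the finiteness conclusion of Theorem~\ref{Thm2} was transported). So the task becomes: the inclusion $\{y_0\}\times(X(\Sigma)\setminus\{x_\Sigma\})\hookrightarrow \phi^{-1}(X(\mathcal{D})\setminus\{x\})$ is $\pi_1$-surjective. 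Now I invoke the exact sequence already produced in the proof of Theorem~\ref{Thm2} via~\cite[Lemma 1.5]{Nor}:
\[
G(\Sigma)\longrightarrow \pi_1\big(\phi^{-1}(X(\mathcal{D})\setminus\{x\})\big)\longrightarrow \pi_1(Y')\longrightarrow 1,
\]
and since $\pi_1(Y')=1$, the first arrow $G(\Sigma)\to \pi_1(\phi^{-1}(X(\mathcal{D})\setminus\{x\}))$ is already surjective. It remains only to check that this first arrow is (up to the $\phi_*$ identification) the map induced by the inclusion of the general fiber with vertex removed — which is precisely the content of Nori's lemma in this setting, the fiber inclusion being the source of the sequence.

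Then I would finish by composing: the inclusion of the vertex-removed general fiber factors as $X(\Sigma)\setminus\{x_\Sigma\}\hookrightarrow \phi^{-1}(X(\mathcal{D})\setminus\{x\})\xrightarrow{\ \phi\ } X(\mathcal{D})\setminus\{x\}$, and both the first map (by the displayed exact sequence) and the second map (by~\cite[Theorem 1.1]{Tak}) are surjective on $\pi_1$, hence so is the composition; reinterpreting $\pi_1(X(\Sigma)\setminus\{x_\Sigma\})$ as the local fundamental group at the vertex of the general fiber and $\pi_1(X(\mathcal{D})\setminus\{x\})$ as the local fundamental group at $x\in X$ gives the claim. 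The main obstacle is purely bookkeeping: making sure the "general fiber" on $X$ and the "general fiber" $X(\Sigma)$ on the resolution $X(\mathcal{S})$ are matched correctly under $\phi$ — i.e., that a general fiber of $\pi$ is not altered by the toroidal resolution, which is guaranteed because the resolution is chosen (as in Theorem~\ref{Thm2}) to be an isomorphism over the trivial locus $V$, where $\pi$ is already trivial — and that the map appearing in Nori's exact sequence is genuinely the fiber-inclusion-induced map rather than some a priori different map. No genuinely new geometric input beyond what Theorem~\ref{Thm2} already established is needed.
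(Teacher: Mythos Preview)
Your proposal is correct and follows essentially the same route as the paper: extract the surjection $G(\Sigma)\to\pi_1(\phi^{-1}(X(\mathcal{D})\setminus\{x\}))$ from the Nori exact sequence already used in the proof of Theorem~\ref{Thm2} (where $\pi_1(Y')=1$), and then identify the target with $\pi_1(X(\mathcal{D})\setminus\{x\})$ via~\cite[Theorem 1.1]{Tak}. Your opening reference to diagram~\eqref{diagram} is a detour you never actually use, and your extra bookkeeping (matching the general fiber under $\phi$, identifying the Nori map with the fiber-inclusion map) is more detail than the paper provides, but the argument is the same.
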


\begin{proof}
Indeed, by the proof of the theorem 
we have a surjection
\[
 G(\Sigma) \rightarrow \pi_1( \phi^{-1}(X(\mathcal{D})-\{ x\}))
\]
and the latter group is isomorphic to 
$\pi_1(X(\mathcal{D})-\{x\})$ 
by~\cite[Theorem 1.1]{Tak}. 
\end{proof}

\section{Examples}
\label{sec:examples}

It is know that a germ of surface 
Du Val singularity $x\in X$,
is a log terminal $\mathbb{T}$-variety
of complexity one with trivial fundamental group.
Moreover, since Du Val singularities are quotients of $\cc^2$ by a binary polyhedral group $G\subsetneq {\rm SL}_2(\mathbb{C})$, 
the fundamental group of the punctured neighborhood $X\setminus \{x\}$ is isomorphic to $G$.
In this subsection, we recall an explicit computation of Du Val singularities as $\mathbb{T}$-varieties
and recover their fundamental group using 
Theorem~\ref{comp1fund}.

\begin{example}
By~\cite[Corollary 5.6]{LS}, we know that every quasi-homogeneous log-terminal surface singularity is isomorphic to the 
section ring of one of the following $\qq$-divisors on $\pp^1$
\[
\mathcal{D}= \left\{ \frac{e_1}{m_1} \right\} \otimes [0] + \left\{ \frac{e_2}{m_2} \right\} \otimes [1] + \left\{ \frac{e_3}{m_3} \right\} \otimes [\infty], \text{ and }
\frac{e_1}{m_1}+ \frac{e_2}{m_2}+ \frac{e_3}{m_3}>0.
\]
Here, the triple $(m_1,m_2,m_3)$ is one of the platonic triples $(1,p,q),(2,2,r),(2,3,3),\\(2,3,4)$ and $(2,3,5)$,
where $p,q\geq 1$ and $r\geq 2$. 
Moreover, the contraction morphism $r\colon \widetilde{X}(\mathcal{D})\rightarrow X(\mathcal{D})$ is
contracting a curve onto the vertex $x$ of $X(\mathcal{D})$. Therefore, the map
\[
\pi \colon \widetilde{X}(\mathcal{D}) - r^{-1}(x) \rightarrow \pp^1
\]
is a $\cc^*$-bundle with at most three non-reduced fibers. 
Then we can apply Corollary~\ref{Cor2} to give a presentation 
of the fundamental group of $\pi_1(X(\mathcal{D}))$ in terms 
of generators and relation as follows
\[
\langle b_1,b_2, t \mid [b_1,t],\, [b_2,t],\,  
t^{e_1}b_1^{m_1},\, t^{e_2}b_2^{m_2},\,  t^{e_3}b^{m_3}  \rangle
\]
where $b=(b_1b_2)^{-1}$.
\end{example}

\begin{example}\label{ex1}
By ~\cite[Theorem 5.7]{LS}, we know that every quasi-homogeneous canonical surface singularity 
is isomorphic to the section ring of one of the following $\qq$-divisors on $\pp^1$
\begin{align}
A_i \colon & \hspace{1cm} \left\{\frac{i+1}{i}\right\} \otimes [\infty] & i\geq 1. \nonumber \\
D_i \colon & \hspace{1cm} \left\{\frac{1}{2}\right\}\otimes [0] + \left\{\frac{1}{i-2}\right\}\otimes [1] + \left\{\frac{-1}{2}\right\}\otimes [\infty]  & i\geq 4. \nonumber \\
E_i \colon & \hspace{1cm} \left\{\frac{1}{3}\right\}\otimes [0] + \left\{\frac{1}{i-3}\right\}\otimes [1] + \left\{\frac{-1}{2}\right\}\otimes [\infty]  &i \in \{6,7,8\}. \nonumber
\end{align}
Remark that we correct some sign typos in the statement of
\cite[Theorem 5.7]{LS}. The proof therein remains valid without
corrections. Proceeding as in Example~\ref{ex1}, we can recover the
local fundamental groups of DuVal singularities
\begin{align*}
A_i \colon & \langle b \mid b^{i+1} \rangle & i\geq 1. \\
D_i \colon & \langle b_1, b_2 \mid b_1^2 = b_2^{i-2}= (b_1b_2)^2 \rangle & i\geq 4.\\
E_i \colon & \langle b_1, b_2 \mid b_1^3 = b_2^{i-3}= (b_1b_2)^2 \rangle & i\in \{6,7,8\}.
\end{align*}
More precisely, for the $E_8$ singularity we obtain the following presentation
\[
\langle b_1,b_2, t \mid [b_1,t],[b_2,t], tb_1^3, tb_2^5, t(b_1b_2)^2 \rangle 
\simeq 
\langle b_1, b_2 \mid b_1^3 = b_2^5 = (b_1b_2)^2 \rangle.
\]
The other computations are analogous.
\end{example}

\begin{example}
We give an example of a rational affine $\mathbb{T}$-variety of complexity one $X(\mathcal{D})$
with a good torus action and vertex $x\in X(\mathcal{D})$, such that the fundamental groups
$\pi_1(X(\mathcal{D}))$ and $\pi_1(X(\mathcal{D})-\{x\})$ are both trivial.
Consider the cone 
\[
\sigma= \langle (-1,1),(11,8)\rangle \subset N_\qq \simeq \qq^2
\]
and the proper polyhedral divisor given by
\[
\mathcal{D}= \Delta_0 \otimes [0] + \Delta_1 \otimes [1] + \Delta_{\infty}\otimes [\infty], 
\]
where
\begin{align}
&\Delta_0  = \sigma+ \left( \frac{2}{5}, \frac{1}{5}\right),\nonumber \\
&\Delta_1  = \sigma+ \left( \frac{1}{3}, \frac{1}{3}\right), \nonumber \\
&\Delta_\infty = \sigma+ {\rm conv}((0,0),(1,0)),\nonumber
\end{align}
where ${\rm conv}$ denotes the convex hull.
By Theorem~\ref{Thm1}, we conclude that $\pi_1(X(\mathcal{D}))$ is trivial.
Moreover, we can apply Theorem~\ref{comp1fund} to see that 
$\pi_1(X(\mathcal{D})-\{x\})$ is isomorphic to
\[
\langle t_1,t_2,b_1,b_2 \mid [t_1,b_1],\dots, [t_2,b_2],
t_1^{-1}t_2, t_1^{11}t_2^8, t_1^2t_2b_1^5, t_1t_2b_2^3, (b_1b_2)^{-1}, t_1(b_1b_2)^{-1}
\rangle.
\]
Indeed, we have that 
\begin{align}
& \mathcal{Q}(\mathcal{D},\{0\}) = \{ (0,-1,1),(0,11,8),(5,2,1)\}, 
\nonumber \\
\nonumber & \mathcal{Q}(\mathcal{D},\{1\})= \{ (0,-1,1),(0,11,8),(3,1,1)\},\text{ and }\\
\nonumber & \mathcal{Q}(\mathcal{D},\{\infty\}) = \{ (0,-1,1),(0,11,8),(1,0,0),(1,1,0)\}.
\end{align}
From the first and last two relations we obtain $t_1=t_2=e$, 
$b_1^5=b_2^3=b_1b_2=e$, so that $b_1=b_2=e$. 
\end{example}

\begin{bibdiv}
\begin{biblist}

\bib{AH05}{article}{
   author={Altmann, Klaus},
   author={Hausen, J{\"u}rgen},
   title={Polyhedral divisors and algebraic torus actions},
   journal={Math. Ann.},
   volume={334},
   date={2006},
   number={3},
   pages={557--607},
}

\bib{AHH}{article}{
   author={Altmann, Klaus},
   author={Hausen, J{\"u}rgen},
   author={S{\"u}ss, Hendrik},
   title={Gluing affine torus actions via divisorial fans},
   journal={Transform. Groups},
   volume={13},
   date={2008},
   number={2},
   pages={215--242},
}

\bib{AIPSV}{article}{
   author={Altmann, Klaus},
   author={Ilten, Nathan Owen},
   author={Petersen, Lars},
   author={S{\"u}{\ss}, Hendrik},
   author={Vollmert, Robert},
   title={The geometry of $T$-varieties},
   conference={
      title={Contributions to algebraic geometry},
   },
   book={
      series={EMS Ser. Congr. Rep.},
      publisher={Eur. Math. Soc., Z\"urich},
   },
   date={2012},
   pages={17--69},
}

\bib{AW}{article}{
   author={Altmann, Klaus},
   author={Wi\'sniewski, Jaros\l aw A.},
   title={Polyhedral divisors of Cox rings},
   journal={Michigan Math. J.},
   volume={60},
   date={2011},
   number={2},
   pages={463--480},
   issn={0026-2285},
}

\bib{ADHL}{book}{
   author={Arzhantsev, Ivan},
   author={Derenthal, Ulrich},
   author={Hausen, J{\"u}rgen},
   author={Laface, Antonio},
   title={Cox rings},
   series={Cambridge Studies in Advanced Mathematics},
   volume={144},
   publisher={Cambridge University Press, Cambridge},
   date={2015},
   pages={viii+530},
   isbn={978-1-107-02462-5},
}

\bib{BCHM}{article}{
   author={Birkar, Caucher},
   author={Cascini, Paolo},
   author={Hacon, Christopher D.},
   author={McKernan, James},
   title={Existence of minimal models for varieties of log general type},
   journal={J. Amer. Math. Soc.},
   volume={23},
   date={2010},
   number={2},
   pages={405--468},
   issn={0894-0347},
}

\bib{bt}{book}{
   author={Bott, Raoul},
   author={Tu, Loring W.},
   title={Differential forms in algebraic topology},
   series={Graduate Texts in Mathematics},
   volume={82},
   publisher={Springer-Verlag, New York-Berlin},
   date={1982},
   pages={xiv+331},
   isbn={0-387-90613-4},
}

\bib{CLS}{book} {
    AUTHOR = {David A. Cox}
    AUTHOR={ John B. Little} 
   AUTHOR={Henry K. Schenck},
     TITLE = {Toric varieties},
    SERIES = {Graduate Studies in Mathematics},
    VOLUME = {124},
 PUBLISHER = {American Mathematical Society, Providence, RI},
      YEAR = {2011},
     PAGES = {xxiv+841},
}

\bib{DJ}{article}{
   author={Davis, Michael W.},
   author={Januszkiewicz, Tadeusz},
   title={Convex polytopes, Coxeter orbifolds and torus actions},
   journal={Duke Math. J.},
   volume={62},
   date={1991},
   number={2},
   pages={417--451},
   issn={0012-7094},
   review={\MR{1104531 (92i:52012)}},
   doi={10.1215/S0012-7094-91-06217-4},
}

\bib{Dem}{article}{
   author={Demazure, Michel},
   title={Sous-groupes alg\'ebriques de rang maximum du groupe de Cremona},
   language={French},
   journal={Ann. Sci. \'Ecole Norm. Sup. (4)},
   volume={3},
   date={1970},
   pages={507--588},
}

\bib{HF}{article}{
   author={Flenner, Hubert},
   author={Zaidenberg, Mikhail},
   title={Normal affine surfaces with $\Bbb C^\ast$-actions},
   journal={Osaka J. Math.},
   volume={40},
   date={2003},
   number={4},
   pages={981--1009},
}

\bib{Franz}{article}{
   author={Franz, M.},
   title={The integral cohomology of toric manifolds},
   journal={Tr. Mat. Inst. Steklova},
   volume={252},
   date={2006},
   number={Geom. Topol., Diskret. Geom. i Teor. Mnozh.},
   pages={61--70},
   issn={0371-9685},
   translation={
      journal={Proc. Steklov Inst. Math.},
      date={2006},
      number={1 (252)},
      pages={53--62},
      issn={0081-5438},
   },
}

\bib{FS}{article}{
   author={Fulton, William},
   author={Sturmfels, Bernd},
   title={Intersection theory on toric varieties},
   journal={Topology},
   volume={36},
   date={1997},
   number={2},
   pages={335--353},
   issn={0040-9383},
}

\bib{Fu}{book}{
   author={Fulton, William},
   title={Introduction to toric varieties},
   series={Annals of Mathematics Studies},
   volume={131},
   note={The William H. Roever Lectures in Geometry},
   publisher={Princeton University Press, Princeton, NJ},
   date={1993},
   pages={xii+157},
}

\bib{GOST}{article}{
   author={Gongyo, Yoshinori},
   author={Okawa, Shinnosuke},
   author={Sannai, Akiyoshi},
   author={Takagi, Shunsuke},
   title={Characterization of varieties of Fano type via singularities of
   Cox rings},
   journal={J. Algebraic Geom.},
   volume={24},
   date={2015},
   number={1},
   pages={159--182},
   issn={1056-3911},
}

\bib{KM98}{book}{
   author={Koll\'ar, J\'anos},
   author={Mori, Shigefumi},
   title={Birational geometry of algebraic varieties},
   series={Cambridge Tracts in Mathematics},
   volume={134},
   note={With the collaboration of C. H. Clemens and A. Corti;
   Translated from the 1998 Japanese original},
   publisher={Cambridge University Press, Cambridge},
   date={1998},
   pages={viii+254},
   isbn={0-521-63277-3},
   review={\MR{1658959}},
}

\bib{KKMS}{book}{
   author={Kempf, G.},
   author={Knudsen, Finn Faye},
   author={Mumford, D.},
   author={Saint-Donat, B.},
   title={Toroidal embeddings. I},
   series={Lecture Notes in Mathematics, Vol. 339},
   publisher={Springer-Verlag, Berlin-New York},
   date={1973},
   pages={viii+209},
}

\bib{Koll}{misc}{
  author = {Koll\'ar, J\'anos},
  title = {New examples of terminal and log canonical singularities},
  year = {2011},
  note = {https://arxiv.org/abs/1107.2864},
}

\bib{LS}{article}{
   author={Liendo, Alvaro},
   author={S{\"u}ss, Hendrik},
   title={Normal singularities with torus actions},
   journal={Tohoku Math. J. (2)},
   volume={65},
   date={2013},
   number={1},
   pages={105--130},
}

\bib{Nor}{article} {
    AUTHOR = {Nori, Madhav V.},
     TITLE = {Zariski's conjecture and related problems},
   JOURNAL = {Ann. Sci. \'Ecole Norm. Sup. (4)},
  FJOURNAL = {Annales Scientifiques de l'\'Ecole Normale Sup\'erieure.
              Quatri\`eme S\'erie},
    VOLUME = {16},
      YEAR = {1983},
    NUMBER = {2},
     PAGES = {305--344},
}

\bib{Tak2}{article}{
   author={Takayama, Shigeharu},
   title={Simple connectedness of weak Fano varieties},
   journal={J. Algebraic Geom.},
   volume={9},
   date={2000},
   number={2},
   pages={403--407},
   issn={1056-3911},
}

\bib{Tak}{article}{
   author={Takayama, Shigeharu},
   title={Local simple connectedness of resolutions of log-terminal
   singularities},
   journal={Internat. J. Math.},
   volume={14},
   date={2003},
   number={8},
   pages={825--836},
}

\bib{Tim08}{incollection}{
  author = {Dmitri A. Timashev},
  title = {Torus actions of complexity one},
  booktitle = {Toric topology},
  publisher = {Amer. Math. Soc.},
  year = {2008},
  volume = {460},
  series = {Contemp. Math.},
  pages = {349--364},
  address = {Providence, RI},
}

\bib{Xu}{article}{
   author={Xu, Chenyang},
   title={Finiteness of algebraic fundamental groups},
   journal={Compos. Math.},
   volume={150},
   date={2014},
   number={3},
   pages={409--414},
   issn={0010-437X},
}

\end{biblist}
\end{bibdiv}
\end{document}